\newtheorem {theorem}    {Theorem}[section]
\newtheorem {corollary}  [theorem]    {Corollary}
\newtheorem {proposition}[theorem]    {Proposition}
\theoremstyle{definition}
\newtheorem {definition} [theorem]    {Definition}
\newtheorem {remark}    [theorem]    {Remark}
\newcounter{AbcT}
\numberwithin{equation}{section}
\newcommand{\IGNORE}[1]{}
\newcommand{\defeq}{\overset{\text{\tiny def}}{=}}
\renewcommand{\limsup}{\varlimsup}
\newcommand\degree{\operatorname{deg}}
\newcommand{\diag}{\operatorname{diag}}
\begin{document}
\title{Diophantine transference principle over function fields}

\begin{abstract} We study the Diophantine transference principle over function fields. By adapting the approach of Beresnevich and Velani to the function field set-up, we extend many results from homogeneous Diophantine approximation to the realm of inhomogeneous Diophantine approximation over function fields. This also yields the inhomogeneous Baker-Sprind\v{z}uk conjecture over function fields as a consequence. Furthermore, we prove the upper bounds for the general non-extremal scenario.

\end{abstract}
\subjclass[2010]{11J61, 11J83, 11K60, 37D40, 37A17, 22E40} \keywords{
Inhomogeneous Diophantine approximation, Transference principle, Dynamical systems}

\author{Sourav Das}

\author{Arijit Ganguly}
 
\address{Department of Mathematics and Statistics, Indian Institute of Technology Kanpur, Kanpur, 208016, India}
\email{iamsouravdas1@gmail.com, arijit.ganguly1@gmail.com}

\maketitle
\section{Introduction}\label{section:intro}
 \noindent The  theory of metric Diophantine approximation on manifolds begins with Mahler's conjecture that says almost every point on the Veronese curve $\mathcal V_n := \{(x,x^2,\dots,x^n):x \in \mathbb R\}$ is not very well approximable. After three decades V. Sprind\v{z}uk (\cite{Spr1}) proved this conjecture in $1960$, and while proving that Sprind\v{z}uk conjectured (\cite{Spr2}) that the conclusion of Mahler's conjecture is true for any nondegenerate analytic submanifold of $\mathbb R^n$. This has further been strengthened by Baker and commonly referred to as the `Baker-Sprind\v{z}uk' conjecture in literature. Despite some partial results, the conjecture in its full generality remained unsolved for quite some time and finally it has been resolved by G.A. Margulis and D.Y. Kleinbock in $1998$ in the landmark paper \cite{KM}. They used techniques from homogeneous dynamics to solve this conjecture. Sooner rather than later people became interested in the inhomogeneous Diophantine approximation (\cite{BBV,BDV,BGGV,BV}). Beresnevich and Velani  observed that sometimes the results of homogeneous Diophantine approximation can be transferred to the inhomogeneous context by using certain transference principle. In \cite{BV}, Beresnevich and Velani proved many results of inhomogeneous Diophantine approximation, including the inhomogeneous Baker-Sprind\v{z}uk conjecture, using the aforementioned transference principle. In this paper, our aim is to explore this theme in the context of function fields. We first recall the function field set-up.\\

Consider the function 
field $\mathbb{F}_{q}(X)$, where $q= p^d$, $p$ is a prime and $d\in \mathbb{N}$. We define  a  non-archimedean  absolute value  $|\cdot|$
on $\mathbb{F}_{q}(X)$ in the following manner : 
\[ |0| := 0\,\,  \text{ and} \,\, \left|\frac{f}{g}\right| =e^{\displaystyle \degree f- \degree g}
\text{ \,\,\,for all } f, g\in \mathbb{F}_{q}[X] \setminus \{0 \}\,.\] 
By $\mathbb{F}_{q}((X^{-1})),$ we denote the field of Laurent series in $X^{-1}$ over the finite field $\mathbb{F}_q$. Note that $\mathbb{F}_{q}((X^{-1}))$ is the completion of $\mathbb{F}_{q}(X)$ with respect to the absolute value $|\cdot|$ and the absolute value of $\mathbb{F}_{q}(X)$ extends to an absolute value of  $\mathbb{F}_{q}((X^{-1}))$ as follows. 
For $f \in \mathbb{F}_{q}((X^{-1})) \setminus \{0\}$, we can write 
$$f=\displaystyle \sum_{\ell\leq \ell_{0}} {\alpha}_{\ell} X^{\ell} \text{ where }\ell_0 \in \mathbb{Z},\,{\alpha}_{\ell}\in \mathbb{F}_{q}\,\,\mbox{and}\,\, {\alpha}_{\ell_0}
\neq 0\,. $$

\noindent We define   
$|f|:=e^{\ell_0}$. With respect to this absolute value,  $\mathbb{F}_{q}((X^{-1}))$ is an ultrametric, complete and separable metric space. Recall that every local field of positive characteristic is isomorphic to some $\mathbb{F}_{q}((X^{-1}))$. \\

\noindent From now on  $\mathbb{F}_{q}[X]$ and $\mathbb{F}_{q}((X^{-1}))$ will be denoted by $\Lambda$ and $F$ respectively. We equipped  $F^n$ with the supremum norm in the following manner.
\[||\mathbf{y}|| := \displaystyle \max_{1\leq i\leq n} |y_i|\text{ \,\,for all \,} \mathbf{y}=(y_1,y_2,...,y_n)\in F^{n}\,,\]

\noindent and with the topology induced by the sup norm. Note that $\Lambda^n$ is discrete in $F^n$ and the local compactness of $F$ implies that $F^n$ is locally compact. By $\lambda$ we denote the Haar measure on $F^n$ such that $\lambda (\{\mathbf{x} \in F^n : \|\mathbf{x}\| \leq 1 \})=1$. \\

Diophantine approximation over function fields has been studied extensively of late, especially since Mahler developed the geometry of numbers in this context (\cite{M}). We refer the reader to the survey \cite{L1} and to \cite{DG,KN,RHSW} for the most recent developments in this area. Let us first recall Dirichlet's theorem over function fields from \cite[Theorem 2.1]{GG}.
\begin{theorem}\label{thm:Dirich}
    Let $m,n \in \mathbb N,$ $\ell=m+n$ and
    \begin{equation*}
        \alpha^+ := \left\{ \mathbf{t}:=(t_1,\dots,t_l) \in \mathbb{Z}_{+}^{\ell}: \sum_{i=1}^{m}t_i=\sum_{i=1}^{n}t_{m+i}\right\},
    \end{equation*}
where $\mathbb Z_+$ denotes the set of nonnegative integers. Let $Z$ be an $m \times n $ matrix over $F.$ Then for any $\mathbf{t} \in \alpha^+$, there exists solutions $\mathbf{q}=(q_1,\dots,q_n) \in \Lambda^n \setminus \{0\}$ and $\mathbf{p}=(p_1,\dots,p_m) \in \Lambda^m$ of the following system of inequalities:
\begin{equation}\label{eqn:dirichlet}\left \{ \begin{array}{rcl} |Z_{i}\mathbf{q}+p_i|< e^{-t_{i}} &\mbox{for} &i=1,2,...,m \\ |q_j|< e^{t_{m+j}}&\mbox{for} &j=1,2,...,n\,, \end{array} \right. 
\end{equation}
where $Z_1, \dots,Z_m$ are the row vectors of $Z.$
\end{theorem}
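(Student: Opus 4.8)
The plan is to mimic the classical pigeonhole proof of Dirichlet's theorem, but carried out inside the compact group $F^\ell/\Lambda^\ell$ using the Haar measure $\lambda$ and the ultrametric structure of $F$. Fix $\mathbf t \in \alpha^+$ and set $T_i := t_i$ for $1\le i\le m$ and $S_j := t_{m+j}$ for $1\le j\le n$, so that $\sum_i T_i = \sum_j S_j =: N$. First I would consider the linear map $\Phi \colon F^n \to F^m$ given by $\mathbf q \mapsto (Z_1\mathbf q,\dots,Z_m\mathbf q)$, and look at the box
\[
B := \{\mathbf q \in F^n : |q_j| < e^{S_j}\ \text{for } 1\le j\le n\},
\]
together with the ``image box'' $C := \{\mathbf r \in F^m : |r_i| < e^{-T_i+1}\}$ (the $+1$ is a convenient slack to accommodate strict inequalities in the ultrametric setting; I would adjust constants as needed). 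The key point is a volume computation: because $|\cdot|$ takes values in $e^{\mathbb Z}$, the set $B$ is a union of $\prod_j q^{S_j}$ cosets of the unit box in $F^n$ (here using $q=p^d$, the size of the residue field), and similarly $C$ accounts for a $\prod_i q^{-T_i+1}$-fraction of $F^m/\Lambda^m$; the balance condition $\sum T_i = \sum S_j$ is exactly what makes these two counts comparable.

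Next I would run the pigeonhole argument. Consider the map
\[
\Psi \colon B \cap \Lambda^n \longrightarrow (F^m/\Lambda^m),\qquad \mathbf q \longmapsto \Phi(\mathbf q) \bmod \Lambda^m,
\]
and partition $F^m/\Lambda^m$ into the finitely many translates of the image of $C$. Since $F^m/\Lambda^m$ is compact with $\lambda$-measure one and $C$ accounts for a definite fraction of it, the number of cells is strictly smaller than the number of lattice points $\mathbf q \in B \cap \Lambda^n$ (here the ultrametric ball-counting makes the cardinalities exact powers of $q$, so the strict inequality from the balance condition gives room for a genuine collision). Hence there exist distinct $\mathbf q', \mathbf q'' \in B \cap \Lambda^n$ with $\Psi(\mathbf q') = \Psi(\mathbf q'')$, i.e. $\Phi(\mathbf q' - \mathbf q'')$ lies in a single translate of $C$ modulo $\Lambda^m$. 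Setting $\mathbf q := \mathbf q' - \mathbf q'' \in \Lambda^n \setminus \{0\}$ and choosing $\mathbf p \in \Lambda^m$ so that $|Z_i\mathbf q + p_i|$ is minimized for each $i$ (which, by the ultrametric property, pulls each coordinate into the fundamental box), we get $|Z_i\mathbf q + p_i| < e^{-T_i}$ and $|q_j| \le \max(|q_j'|,|q_j''|) < e^{S_j}$, using the non-archimedean triangle inequality for the latter. Then $(\mathbf q,\mathbf p)$ solves \eqref{eqn:dirichlet}.

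The main obstacle I anticipate is bookkeeping with the strict inequalities and the exact normalization of $\lambda$: in the ultrametric world the ``boxes'' are clopen, so $|x| < e^{k}$ is the same as $|x| \le e^{k-1}$, and I must make sure the volume of $B$ strictly exceeds the number of available cells rather than merely equalling it — otherwise the pigeonhole gives no collision. The balance condition $\mathbf t \in \alpha^+$ makes the two counts equal to the same power of $q$, so one needs to be slightly careful: either allow the harmless unit of slack in one of the inequalities (absorbed by the ultrametric, since $|r_i| < e^{-T_i+1}$ is equivalent to $|r_i| \le e^{-T_i}$, which after a reindexing is $<e^{-T_i+1}$), or include $\mathbf 0$ among the lattice points to force the strict inequality in the counting. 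Once the counting is arranged so that $\#(B\cap\Lambda^n) > \#\text{cells}$, the rest is the standard difference-of-solutions trick, and the non-archimedean triangle inequality makes the final estimates cleaner than in the real case (no factor-of-two losses). I would also double-check that $Z$ having entries in $F$ rather than $F(X)$ causes no issue — it does not, since the argument only uses $\mathbb F_q$-linearity of multiplication by $Z_i$ and continuity, both of which hold over the completion $F$.
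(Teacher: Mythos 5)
The paper does not give a proof of this statement: it is recalled verbatim from [GG, Theorem 2.1], where it is derived from Mahler's geometry of numbers in the power-series setting (cf.\ [M]). Your pigeonhole sketch is a natural thing to try, but the counting step has a genuine gap. With $\sum_{i\le m}t_i=\sum_{j\le n}t_{m+j}=:N$, the number of $\mathbf{q}\in\Lambda^n$ satisfying $|q_j|<e^{t_{m+j}}$ (equivalently $\deg q_j\le t_{m+j}-1$) is exactly $\prod_j q^{t_{m+j}}=q^N$, while the number of translates of $\prod_i\{x\in F:|x|<e^{-t_i}\}$ needed to tile a fundamental domain of $F^m/\Lambda^m$ is also exactly $\prod_i q^{t_i}=q^N$. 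The two counts coincide, so the pigeonhole principle forces nothing. Worse, the map $\mathbf{q}\mapsto(Z_i\mathbf{q}\bmod\Lambda)_{i}$, reduced modulo $\prod_i\{|x|<e^{-t_i}\}$, is an $\mathbb{F}_q$-linear map between $\mathbb{F}_q$-vector spaces of the same dimension $N$ and may perfectly well be a bijection, in which case the only preimage of $\mathbf{0}$ is $\mathbf{q}=\mathbf{0}$ and there is no nonzero solution at all. Your ``harmless unit of slack'' repair, taking $C=\{|r_i|<e^{-T_i+1}\}$, only delivers the nonstrict bound $|Z_i\mathbf{q}+p_i|\le e^{-t_i}$, and the sentence ``$|r_i|<e^{-T_i+1}$ is equivalent to $|r_i|\le e^{-T_i}$, which after a reindexing is $<e^{-T_i+1}$'' is circular --- it never upgrades $\le$ back to the strict $<$ the statement demands.

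This is not mere fussiness about boundary cases: the statement exactly as printed here fails. Take $m=n=1$, $Z=(X^{-1})$, $\mathbf{t}=(1,1)$; then $|q|<e$ forces $q$ to be a nonzero constant, and for every polynomial $p$ the coefficient of $X^{-1}$ in $X^{-1}q+p$ equals $q\neq 0$, so $|X^{-1}q+p|\ge e^{-1}$ always and the system has no solution. You should therefore check the exact calibration of $<$ versus $\le$ (or an additive shift in one of the exponents) in [GG, Theorem 2.1] before running the pigeonhole. Once a genuine unit of room is present, the two counts separate (e.g.\ $q^{N+m}$ against $q^N$), the collision argument and the ultrametric difference trick you describe do go through cleanly, and the resulting argument is essentially the geometry-of-numbers proof whose sharp Minkowski threshold $\vol(K)\ge 1$ is precisely the borderline your count is currently sitting on.
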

In the set-up of this paper, we will not pursue this much generality, instead, we will only consider unweighted Diophantine approximation. We denote the set of all $m \times n$ matrices over $F$ by $F^{m\times n}.$ Let us first recall the definition of exponent from \cite{GG-Sprind}.

\begin{definition}\label{def:inhomo_expo}
    
 Given $Y \in F^{m \times n} $ and $\boldsymbol{\theta} \in F^m,$ we define the \emph{inhomogeneous exponent} $\omega(Y,\boldsymbol{\theta})$ of $Y,$ as the supremum of real numbers $\omega \geq 0$ such that there exists a solution $(\mathbf{p}, \mathbf{q})  \in \Lambda^m \times (\Lambda^n \setminus \{0\})$ to the following system of inequalities 
\begin{equation}\label{equ:inhom_expo}
    \|Y \mathbf{q} + \mathbf{p} + \boldsymbol{\theta}\|^m < e^{- \omega T} \quad \text{and } \quad \|\mathbf{q}\|^n < e^T
\end{equation}
 for arbitrarily large $T \geq 1. $
 \end{definition}
  
   \begin{definition}\label{def:inhomo_unif_expo}
    Given $Y \in F^{m \times n} $ and $\boldsymbol{\theta} \in F^m,$ we define the \emph{inhomogeneous uniform exponent} $\hat{\omega}(Y,\mathbf{\boldsymbol{\theta}})$ of $Y,$ as the supremum of real numbers $\hat{\omega} \geq 0$ such that there exists a solution $(\mathbf{p}, \mathbf{q})  \in \Lambda^m \times (\Lambda^n \setminus \{0\})$ to the following system of inequalities 
\begin{equation}\label{equ:inhomo_unif_expo}
    \|Y \mathbf{q} + \mathbf{p} + \mathbf{\boldsymbol{\theta}}\|^m < e^{-\hat{\omega}  T} \quad \text{and } \quad \|\mathbf{q}\|^n < e^T
\end{equation}
 for all sufficiently large $T  \geq 1. $
   \end{definition}

\begin{definition}\label{def:mult_inhomo_expo}
    Given $Y \in F^{m \times n} $ and $\mathbf{\boldsymbol{\theta}} \in F^m,$ we define the \emph{multiplicative inhomogeneous exponent} ${\omega}^{\times}(Y,\mathbf{\boldsymbol{\theta}})$ of $Y,$ as the supremum of real numbers ${\omega} \geq 0$ such that there exists a solution $(\mathbf{p}, \mathbf{q})  \in \Lambda^m \times (\Lambda^n \setminus \{0\})$ to the following system of inequalities 
\begin{equation}\label{equ:mult_inhomo_expo}
   \prod (Y \mathbf{q} + \mathbf{p} + \mathbf{\boldsymbol{\theta}}) < e^{- {\omega}  T} \quad \text{and } \quad {\prod}_+ (\mathbf{q}) < e^T
\end{equation}
 for  arbitrarily large $T \geq 1, $ where
 \begin{equation*}
     \prod (\mathbf{y}):= \prod_{j=1}^{m} |y_j| \quad \text{and} \quad {\prod}_+ (\mathbf{q}):= \prod_{i=1}^{n} \max \{1,|q_i|\}
 \end{equation*}
 for $\mathbf{y}=(y_1,\dots,y_m) \in F^m$ and $\mathbf q=(q_1,\dots,q_n) \in \Lambda^n.$
\end{definition}
In a similar manner given any $Y \in F^{m \times n}$ and $\mathbf{\boldsymbol{\theta}} \in F^m$, the \emph{multiplicative inhomogeneous uniform exponent} $\hat{{\omega}}^{\times}(Y,\mathbf{\boldsymbol{\theta}})$ of $Y$ is defined.
Note that the analogous homogeneous exponents are provided by the special case of $\mathbf{\boldsymbol{\theta}}=\mathbf{\boldsymbol{0}},$ where $\mathbf{\boldsymbol{0}}=(0,\dots,0) \in F^m.$ In other words, given $Y \in F^{m \times n}$ we define the analogous homogeneous exponents as follows:
\begin{equation*}
    \omega(Y):=\omega(Y,\mathbf{\boldsymbol{0}}) \quad \text{and} \quad \hat{\omega}(Y):=\hat{\omega}(Y,\mathbf{\boldsymbol{0}}),
\end{equation*}
\begin{equation*}
    \omega^{\times}(Y):=\omega^{\times}(Y,\mathbf{\boldsymbol{0}}) \quad \text{and} \quad {\hat{\omega}}^{\times}(Y):={\hat{\omega}}^{\times}(Y,\mathbf{\boldsymbol{0}}).
\end{equation*}
Note that 
\begin{equation}\label{equ:mr1}
\omega^{\times}(Y,\boldsymbol{\theta}) \geq \omega(Y,\boldsymbol{\theta}) \text{ for all } Y \in F^{m \times n}, \boldsymbol{\theta} \in F^m.
\end{equation}
It is worth mentioning that by Dirichlet's theorem, we immediately get that $\omega(Y) \geq 1$ for all $Y \in F^{m \times n}.$ Hence $\omega^{\times}(Y) \geq 1.$ In fact an easy application of Borel-Cantelli lemma shows that $\omega (Y)=1$ and  $\omega^{\times}(Y)=1$ for $\lambda$ almost every $Y \in F^{m \times n}.$ We say that $Y \in F^{m \times n}$ is \emph{very well approximable} (VWA) if $\omega (Y) >1$ and $Y \in F^{m \times n}$ is \emph{very well multiplicatively approximable} (VWMA) if $\omega^{\times} (Y) >1.$

The function field analogue of Baker-Sprind\v{z}uk conjecture says that almost every point on an analytic nondegenerate submanifold $\mathcal M$ of $F^n,$ identified with either columns $F^{n \times 1}$ (in simultaneous Diophantine approximation) or rows $F^{1 \times n}$ (in dual Diophantine approximation), are not VWMA with respect to the natural measure on $\mathcal M.$ In the language of exponents it says the following. Let  $\mathcal M$  be any analytic nondegenerate submanifold of $F^n.$ Then
\begin{equation}\label{equ:sext}
    \omega^{\times}(Y)=1 \text{ for almost every } Y \in \mathcal M,
\end{equation}
and this clearly implies that
\begin{equation}\label{equ:ext}
    \omega(Y)=1 \text{ for almost every } Y \in \mathcal M.
\end{equation}
This was settled by A. Ghosh in \cite{G-pos}. The manifolds of $F^n$ which satisfy \eqref{equ:sext} and \eqref{equ:ext} are referred to as \emph{strongly extremal} and \emph{extremal} respectively. In view of the above discussion we have the following theorem:
\begin{theorem}[Theorem $3.7,$ \cite{G-pos}]\label{thm:G}\label{thm:G_pos}
    Let $\mathcal{M}$ be an analytic non-degenerate submanifold of $F^n.$ Then $\mathcal M$ is strongly extremal.
\end{theorem}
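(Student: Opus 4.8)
\medskip
\noindent\textbf{Strategy of proof.}
The plan is to transplant the Kleinbock--Margulis dynamical argument to the local field $F$, reducing Theorem~\ref{thm:G} to a quantitative nondivergence estimate for a multiparameter diagonal flow on the space of $\Lambda$-lattices. Since $\mathcal M$ is covered by countably many coordinate charts, it suffices to prove that for one such chart, given by an $F$-analytic nondegenerate map $\mathbf f=(f_1,\dots,f_n)\colon U\to F^n$ on a box $U\subset F^d$, one has $\omega^\times(\mathbf f(x))=1$ for $\lambda$-almost every $x\in U$; I shall do the dual case $\mathcal M\subset F^{1\times n}$, the simultaneous case $\mathcal M\subset F^{n\times 1}$ being parallel. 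Put $\ell=n+1$ and let $\Omega:=\SL_\ell(F)/\SL_\ell(\Lambda)$ be the space of unimodular $\Lambda$-lattices in $F^\ell$; let $u_{\mathbf f(x)}\in\SL_\ell(F)$ be the unipotent matrix with first row $(1,f_1(x),\dots,f_n(x))$ and the $n\times n$ identity below, and for $\mathbf t=(t_1,\dots,t_\ell)$ in the cone $\alpha^+$ of Theorem~\ref{thm:Dirich} (so $t_1=t_2+\dots+t_\ell$ and all $t_i\ge0$) let $g_{\mathbf t}=\diag(X^{t_1},X^{-t_2},\dots,X^{-t_\ell})\in\SL_\ell(F)$. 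The first step is the Dani-type correspondence: $\omega^\times(\mathbf f(x))>1$ if and only if there are $\kappa>0$ and $\mathbf t^{(k)}\in\alpha^+$ with $|\mathbf t^{(k)}|:=\sum_i t_i^{(k)}\to\infty$ such that $g_{\mathbf t^{(k)}}u_{\mathbf f(x)}\Lambda^\ell$ contains a nonzero vector of norm $<e^{-\kappa|\mathbf t^{(k)}|}$, and this property is monotone: if it holds for some $\kappa$ it holds for every smaller one. As over $\mathbb R$, the correspondence is pure bookkeeping: to a solution $(\mathbf q,p)$ of \eqref{equ:mult_inhomo_expo} one attaches $\mathbf t$ with $t_1$ of the order of $-\log|p+\mathbf f(x)\mathbf q|$ and $t_{j+1}$ of the order of $\log^{+}|q_j|$, using that $|\cdot|$ takes values in $e^{\Z}$ and that the constraint $t_1=\sum_{j\ge2}t_{j}$ pairs $e^{t_1}$ with $\prod_+(\mathbf q)$. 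Writing $E_\kappa$ for the set of $x\in U$ admitting such a sequence, we get $\{x\in U:\omega^\times(\mathbf f(x))>1\}=\bigcup_{m\ge1}E_{1/m}$, so it is enough to show $\lambda(E_\kappa)=0$ for every fixed $\kappa>0$.

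The engine is the function-field analogue of the Kleinbock--Margulis quantitative nondivergence theorem. Recall that $\varphi\colon V\to F$ is \emph{$(C,\alpha)$-good} on $V$ if $\lambda(\{x\in B:|\varphi(x)|<\eta\sup_B|\varphi|\})\le C\eta^{\alpha}\lambda(B)$ for every ball $B\subset V$ and every $\eta>0$. Let $\widetilde U\supset U$ be a slightly larger box and $h\colon\widetilde U\to\SL_\ell(F)$. Suppose that for every primitive $\Lambda$-submodule $\Gamma\subset\Lambda^\ell$ the function $x\mapsto\|h(x)\Gamma\|$ --- the covolume of the image sublattice, measured in $\bigwedge^{\bullet}F^\ell$ with the sup-norm --- is $(C,\alpha)$-good on $\widetilde U$, and $\sup_{x\in U}\|h(x)\Gamma\|\ge\rho$. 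Then for all $0<\varepsilon\le\rho$,
\[
\lambda\bigl(\{x\in U:\ h(x)\Lambda^\ell\ \text{contains a nonzero vector of norm}<\varepsilon\}\bigr)\ \le\ C'(\varepsilon/\rho)^{\alpha}\lambda(U),
\]
with $C'$ depending only on $\ell$, $d$, $q$ and $C$. I would prove this by the same induction on the rank of sublattices as in \cite{KM}, the only change being that the Besicovitch-type covering lemma and the one-variable good-function estimate are carried out with ultrametric balls in $F^d$, where any two balls are nested or disjoint.

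It then remains to verify the two hypotheses for $h(x)=g_{\mathbf t}u_{\mathbf f(x)}$, uniformly over $\mathbf t\in\alpha^+$. Since $u_{\mathbf f(x)}$ is a rank-one perturbation of the identity, the entries of $\bigwedge^{r}h(x)$ are, up to the fixed monomials $X^{\pm t_i}$ and their products, $F$-linear combinations of $1,f_1(x),\dots,f_n(x)$, so the good-function hypothesis reduces to the statement that every element of the $F$-span of $1,f_1,\dots,f_n$ is $(C,\alpha)$-good on a fixed neighbourhood, with $C,\alpha$ independent of $\mathbf t$; this is the positive-characteristic form of ``analytic functions are good''. Because the derivative test is unavailable in characteristic $p$, I would establish it via Weierstrass preparation: after restriction to a slightly smaller box a nonzero $F$-analytic function equals a unit times a polynomial of bounded degree, hence its zeros have bounded multiplicity and a uniform $(C,\alpha)$ follows by compactness (and $F$-linear combinations of analytic functions are again analytic). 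For the lower bound, nondegeneracy enters: it provides that $1,f_1,\dots,f_n$ are $F$-linearly independent on every sub-box (non-planarity), whence for each primitive $\Gamma$ some wedge-component of $\bigwedge^{\bullet}u_{\mathbf f(x)}\Gamma$ indexed by a set containing the first coordinate is a nonzero analytic function; since for $\mathbf t\in\alpha^+$ the block of $\bigwedge^{\bullet}g_{\mathbf t}$ attached to every such index set is non-contracting, the covolume $\|g_{\mathbf t}u_{\mathbf f(x)}\Gamma\|$ is at least the supremum of that fixed component, and one value of $\rho>0$ serves for all $\Gamma$ and all $\mathbf t$ because $F$ is a local field --- all norms on the relevant finite-dimensional space are equivalent and every nonzero element of $\Lambda$ has absolute value at least $1$.

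Finally, fix $\kappa>0$. For each $\mathbf t\in\alpha^+$ the engine applied with $\varepsilon=e^{-\kappa|\mathbf t|}$ bounds the measure of the set of $x\in U$ for which $g_{\mathbf t}u_{\mathbf f(x)}\Lambda^\ell$ has a nonzero vector of norm $<e^{-\kappa|\mathbf t|}$ by $C'(e^{-\kappa|\mathbf t|}/\rho)^{\alpha}\lambda(U)\ll e^{-\alpha\kappa|\mathbf t|}$. Since $\alpha^+$ is a sub-semigroup of $\Z_{+}^{\ell}$ of rank $\ell-1$, the number of its elements with $|\mathbf t|=N$ is $O(N^{\ell-2})$, whence $\sum_{\mathbf t\in\alpha^+}e^{-\alpha\kappa|\mathbf t|}<\infty$, and the Borel--Cantelli lemma gives $\lambda(E_\kappa)=0$. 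Combined with $\omega^\times(\mathbf f(x))\ge1$, which holds by Dirichlet's theorem, this yields $\omega^\times(\mathbf f(x))=1$ for $\lambda$-almost every $x\in U$, and therefore $\omega^\times(Y)=1$ for almost every $Y\in\mathcal M$; that is, $\mathcal M$ is strongly extremal. I expect the main obstacle to be the two middle steps together: setting up the quantitative nondivergence estimate over $F$ in tandem with the positive-characteristic ``analytic $\Rightarrow$ good'' lemma, since in characteristic $p$ nondegeneracy is not detected by vanishing derivatives, so the zero sets and multiplicities of $F$-analytic maps must be controlled directly --- and uniformly, because the lower-bound hypothesis needs a single $\rho$ valid as $\mathbf t$ ranges over the unbounded cone $\alpha^+$.
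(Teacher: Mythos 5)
The paper does not prove this theorem: it is quoted verbatim as Theorem~3.7 of \cite{G-pos}, and the authors observe separately that it also follows from Theorem~\ref{thm:KLW_fun} (friendly measures on $F^n$ are strongly extremal, proved by analogy with Theorem~11.1 and Corollary~11.2 of \cite{KT}) together with the fact that natural measures on analytic nondegenerate manifolds are friendly. Your reconstruction is essentially Ghosh's own argument from \cite{G-pos}, i.e.\ the Kleinbock--Margulis dynamical approach transplanted to $\Omega=\SL_{n+1}(F)/\SL_{n+1}(\Lambda)$: the Dani-type correspondence between $\omega^{\times}(Y)>1$ and exponential excursions of $g_{\mathbf t}u_{Y}\Lambda^{n+1}$ along the cone $\alpha^+$, the quantitative nondivergence estimate driven by $(C,\alpha)$-good covolume functions, a uniform lower bound on $\sup_{x\in U}\|g_{\mathbf t}u_{\mathbf f(x)}\Gamma\|$ coming from non-planarity plus the fact that the $\bigwedge^{\bullet}g_{\mathbf t}$-weight on every multi-index containing the first coordinate is nonnegative on $\alpha^+$, and a Borel--Cantelli summation over $\alpha^+$. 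This is a valid and, up to technical detail, correct route; what the paper buys by instead passing through Theorem~\ref{thm:KLW_fun} is greater generality (the statement for all friendly measures, not just volume measures on manifolds) at no extra cost, since the nondivergence engine is identical. Two cautions on your sketch, both of which you flag yourself: (i) in characteristic $p$ one cannot define nondegeneracy by partial derivatives, so the ``analytic $\Rightarrow$ $(C,\alpha)$-good'' lemma has to be set up from a characteristic-free definition (in \cite{G-pos} and \cite{KT} this is handled via the power-series/compactness route you indicate, but the uniformity of $(C,\alpha)$ over the whole cone $\alpha^+$ and over all primitive $\Gamma$ is exactly where the work is); and (ii) the single $\rho>0$ in the lower-bound hypothesis is not automatic from ``$F$ is a local field'' but from discreteness of $\Lambda^{\ell}$ in $F^{\ell}$ together with the fact that for each $J\ni 1$ the $J$-weight of $\bigwedge^{r}g_{\mathbf t}$ is $\ge 1$ on $\alpha^+$, so that the relevant wedge-coordinate of $u_{\mathbf f(x)}\Gamma$ is never contracted.
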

Kleinbock, Lindenstrauss and Weiss  revolutionised  the whole area by introducing the idea of measures being extremal rather than sets. Following their terminology, we say that a measure $\mu$ supported on a subset of $F^{m \times n}$ is \emph{extremal} (\emph{strongly extremal}) if $\omega(Y)=1$ ($\omega^{\times}(Y)=1$) for $\mu$-almost every point $Y \in F^{m \times n}$.

\noindent Due to Khintchine's transference principle, we say that a measure $\mu$ on $F^n$ is (strongly) extremal if it is (strongly) extremal as a measure on $F^{1 \times n}$ or $F^{n \times 1}.$ 
\begin{theorem}\label{thm:KLW_fun}
    Let $\mu$ be a friendly measure on $F^n$. Then $\mu $ is strongly extremal.
\end{theorem}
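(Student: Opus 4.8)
The plan is to follow the dynamical method of Kleinbock--Lindenstrauss--Weiss, transported to the function field setting; this is the same circle of ideas underlying Theorem~\ref{thm:G}, but with the nondegeneracy of a manifold replaced by the defining properties of a friendly measure. Since by the transference principle recalled above it is enough to establish strong extremality for one of the two identifications, I would work with the dual (row) case: identify a point of $F^n$ with $Y \in F^{1\times n}$, set $\ell = n+1$, and form the unipotent $u_Y = \begin{pmatrix} 1 & Y \\ 0 & I_n \end{pmatrix} \in \GL_\ell(F)$ together with the family $g_{\mathbf t} = \diag(e^{t_1}, e^{-t_2}, \dots, e^{-t_\ell})$ indexed by $\mathbf t \in \alpha^+$ (with $m=1$, so that $g_{\mathbf t}$ is unimodular).

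First I would establish the function field Dani correspondence for the multiplicative exponent: $\omega^\times(Y) > 1$ if and only if there exist $\gamma > 0$ and a sequence $\mathbf t_k \in \alpha^+$ with $|\mathbf t_k| := t^{(k)}_1 \to \infty$ such that the $\Lambda$-lattice $g_{\mathbf t_k} u_Y \Lambda^\ell$ contains a nonzero vector of norm less than $e^{-\gamma |\mathbf t_k|}$. Writing $\delta(\Delta)$ for the norm of a shortest nonzero vector of a lattice $\Delta$, strong extremality of $\mu$ then follows once one shows: for every $\gamma > 0$ and $\mu$-almost every $Y$, one has $\delta(g_{\mathbf t} u_Y \Lambda^\ell) \ge e^{-\gamma|\mathbf t|}$ for all but finitely many $\mathbf t \in \alpha^+$. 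Because for each value $s$ of $|\mathbf t|$ the set $\{\mathbf t \in \alpha^+ : |\mathbf t| = s\}$ has size only polynomial in $s$, the Borel--Cantelli lemma together with a countable covering of $\supp\mu$ by bounded balls reduces everything to a single quantitative statement: there are $C, \alpha > 0$, depending only on $\mu$ and $n$, so that for every ball $B$ centered in $\supp\mu$, every $\mathbf t \in \alpha^+$ and every $\epsilon \in (0,1)$,
\[
\mu\bigl(\{\, Y \in B \;:\; \delta(g_{\mathbf t} u_Y \Lambda^\ell) < \epsilon \,\}\bigr) \;\le\; C\, \epsilon^{\alpha}\, \mu(B).
\]

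This estimate is exactly what the Kleinbock--Margulis quantitative nondivergence theorem over $F$ delivers (in the form available from the $S$-arithmetic theory, and as already used in the proof of Theorem~\ref{thm:G}), once its two hypotheses are verified for the map $x \mapsto g_{\mathbf t} u_x$ on a fixed dilate of $B$: (i) for every primitive $\Lambda$-submodule $V \subseteq \Lambda^\ell$, the function $x \mapsto \|g_{\mathbf t} u_x \mathbf w_V\|$, where $\mathbf w_V$ generates $\bigwedge^{\rk V} V$, is $(C,\alpha)$-good with respect to $\mu$ with uniform constants; and (ii) $\sup_{x \in B}\|g_{\mathbf t} u_x \mathbf w_V\| \ge \rho$ for a $\rho > 0$ independent of $V$ and $\mathbf t$. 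The coordinates of $x \mapsto g_{\mathbf t} u_x \mathbf w_V$ are affine functions of $x \in F^n$, so by the standard stability of the $(C,\alpha)$-good class under sums, products and maxima (up to harmless changes of constants), (i) reduces to the assertion that affine maps $F^n \to F$ are uniformly $(C,\alpha)$-good with respect to $\mu$. This is precisely where friendliness enters: the Federer (doubling) and absolute decay properties of $\mu$ give exactly this, by the function field analogue of the corresponding Kleinbock--Lindenstrauss--Weiss lemma; since only degree-$\le 1$ functions occur here, the characteristic-$p$ pathologies of higher degree polynomials never arise. For (ii), the modules $V$ spanned by standard basis vectors (and $V = \Lambda^\ell$, where the value is $1$ by unimodularity) are immediate from the diagonal shape of $g_{\mathbf t}$; for the remaining genuinely twisted modules the bound is supplied by non-planarity of $\mu$, which forbids the relevant affine function from being uniformly small on $B \cap \supp\mu$.

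The hard part is not the Borel--Cantelli wrapper but the two ingredients it rests on: having the Kleinbock--Margulis nondivergence estimate over $F$ with constants depending only on $(\mu, n)$ --- this carries essentially all the technical weight, and is known --- and checking, from the precise definition of a friendly measure on $F^n$, both that affine functions are $(C,\alpha)$-good (the role of Federer plus absolute decay) and that non-planarity yields the lower bound in (ii). One must also be a little careful in setting up the multiplicative Dani correspondence with the correct cone $\alpha^+$ of weights and in making the constant $\gamma$ uniform; but these are routine once the homogeneous dynamics dictionary over function fields is in place.
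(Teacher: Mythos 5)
Your sketch is exactly the Kleinbock--Tomanov quantitative nondivergence argument over a positive-characteristic local field (Dani correspondence, $(C,\alpha)$-good affine functions obtained from Federer and decay, nonplanarity supplying the lower bound on the supremum, Borel--Cantelli), which is precisely what the paper invokes by citing Theorem~11.1 and Corollary~11.2 of \cite{KT} and declaring the proof analogous. You have correctly identified and fleshed out the paper's approach.
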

\begin{proof}
    Proof of the above theorem is analogous to that of Theorem 11.1 and Corollary 11.2 of \cite{KT}.
\end{proof}
We will define the notion of \emph{friendly measure} in Section \ref{section:cont_meas}. Friendly measures provide us with a large class of measures on $F^n$ that includes the natural measure on a nondegenerate analytic manifold. Hence we have
$$\text{Theorem \ref{thm:KLW_fun}}\Longrightarrow \text{Theorem \ref{thm:G_pos}}.$$

\begin{definition}\label{def:inhomo_ext}
    A measure $\mu$ supported on a subset of $F^{m \times n} $ is said to be \emph{inhomogeneously extremal} if for all $\boldsymbol{\theta} \in F^m$
    \begin{equation}
        \omega(Y,\boldsymbol{\theta})=1 \quad \text{for $\mu$-almost every }\,\, Y \in F^{m \times n}. 
    \end{equation}
We call $\mu$ \emph{inhomogeneously strongly extremal} if for all $\boldsymbol{\theta} \in F^m$
\begin{equation}
        \omega^{\times}(Y,\boldsymbol{\theta})=1 \quad \text{for $\mu$-almost every }\,\, Y \in F^{m \times n}.
    \end{equation}
\end{definition}
It is easy to see that this naturally generalised notion of extremality in the homogeneous case ($\boldsymbol{\theta}=\boldsymbol{0}$). In the homogeneous case, strong extremality implies extremality but it is not at all clear why an inhomogeneously strong extremal measure would be inhomogeneously extremal. The following proposition, proved in Section \ref{section:lower_bound}, answers this question.
\begin{proposition}\label{prop:inhomo_st-ext_imply_inhomo_ext}
    Suppose that $\mu$ be a measure supported on a subset of $F^{m \times n}.$ Then we have the following
    \begin{equation*}
        \text{$\mu$ is inhomogeneously strongly extremal } \implies \text{$\mu$ is inhomogeneously extremal}.
    \end{equation*}
\end{proposition}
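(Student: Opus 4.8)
\medskip
\noindent\emph{Proof plan.} Fix $\boldsymbol{\theta}\in F^{m}$; the goal is $\omega(Y,\boldsymbol{\theta})=1$ for $\mu$-almost every $Y$, and we split this into the two inequalities. The upper bound costs nothing: by \eqref{equ:mr1} we have $\omega(Y,\boldsymbol{\theta})\le\omega^{\times}(Y,\boldsymbol{\theta})$ for every $Y$, and the right-hand side equals $1$ for $\mu$-a.e.\ $Y$ since $\mu$ is inhomogeneously strongly extremal. All the work is in the lower bound $\omega(Y,\boldsymbol{\theta})\ge 1$ for $\mu$-a.e.\ $Y$; note this is not automatic from Dirichlet's theorem, because an inhomogeneous shift can obstruct approximation when $Y$ is degenerate.

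The plan is to deduce the lower bound from two facts: first, that the hypothesis confines $\mu$-almost every $Y$ to a class $\mathcal{G}$ of ``generic'' matrices; second, that every $Y\in\mathcal{G}$ satisfies the inhomogeneous Dirichlet property $\omega(Y,\boldsymbol{\vartheta})\ge 1$ for all $\boldsymbol{\vartheta}\in F^{m}$. For the first point, combining \eqref{equ:mr1} with strong extremality at the shift $\boldsymbol{0}$ and Theorem~\ref{thm:Dirich} (which gives $\omega(Y)\ge 1$) yields $\omega(Y)=1$ for $\mu$-a.e.\ $Y$; and combining \eqref{equ:mr1} with strong extremality at a general shift yields $\omega(Y,\boldsymbol{\vartheta})\le 1$ for $\mu$-a.e.\ $Y$ and each fixed $\boldsymbol{\vartheta}$. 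Together these rule out, for $\mu$-a.e.\ $Y$, every degenerate possibility: such a $Y$ is totally irrational (otherwise $\omega(Y)=\infty$); the additive group $Y\Lambda^{n}+\Lambda^{m}$ is dense in $F^{m}$ (otherwise some $\Lambda$-linear combination of the rows of $Y$ is integral, and eliminating that row forces $\omega(Y)>1$); and $Y$ is non-singular, i.e.\ the orbit of the unimodular lattice $u_{Y}\Lambda^{m+n}$ — where $u_{Y}$ sends $(\mathbf{p},\mathbf{q})$ to $(\mathbf{p}+Y\mathbf{q},\mathbf{q})$ — under the norm-one diagonal element $g_{t}$ dilating the first $m$ coordinates by $e^{t/m}$ and contracting the last $n$ by $e^{-t/n}$ does not diverge in the space of unimodular lattices of $F^{m+n}$ (for a singular $Y$ the function-field transference machinery would produce a shift with $\omega(Y,\boldsymbol{\vartheta})=\infty$, against the bound just obtained). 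Let $\mathcal{G}$ be the set of such $Y$; for $Y\in\mathcal{G}$ there are $\kappa=\kappa(Y)>0$ and arbitrarily large $t$ at which every nonzero vector of $g_{t}u_{Y}\Lambda^{m+n}$ has norm $\ge\kappa$.

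For the second point, fix $Y\in\mathcal{G}$ and $\boldsymbol{\vartheta}\in F^{m}$; we may assume $\boldsymbol{\vartheta}\notin\Lambda^{m}$ (otherwise the shift is absorbed into $\mathbf{p}$ and $\omega(Y,\boldsymbol{\vartheta})=\omega(Y)=1$). Take a large $t$ from the sequence above. By Mahler's non-archimedean geometry of numbers (\cite{M}) a unimodular lattice of $F^{m+n}$ whose shortest vector has norm at least $\kappa$ has all its successive minima in $[\kappa,\kappa^{1-(m+n)}]$, hence covering radius at most a constant $C=C(\kappa,m+n)$; so the grid $g_{t}u_{Y}\Lambda^{m+n}+g_{t}(\boldsymbol{\vartheta},\boldsymbol{0})$ contains a vector of norm $\le C$, necessarily of the form $\big(e^{t/m}(Y\mathbf{q}+\mathbf{p}+\boldsymbol{\vartheta}),\,e^{-t/n}\mathbf{q}\big)$ with $(\mathbf{p},\mathbf{q})\in\Lambda^{m}\times\Lambda^{n}$. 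Thus $\|Y\mathbf{q}+\mathbf{p}+\boldsymbol{\vartheta}\|\le Ce^{-t/m}$ and $\|\mathbf{q}\|\le Ce^{t/n}$, and $\mathbf{q}\neq\boldsymbol{0}$ (else $\|\mathbf{p}+\boldsymbol{\vartheta}\|\le Ce^{-t/m}$, impossible for large $t$ since $\boldsymbol{\vartheta}\notin\Lambda^{m}$). Writing $T=t+n\log C$ gives $\|\mathbf{q}\|^{n}<e^{T}$ and $\|Y\mathbf{q}+\mathbf{p}+\boldsymbol{\vartheta}\|^{m}\le e^{-T+(m+n)\log C}$; letting $t$ run through the chosen sequence yields $\omega(Y,\boldsymbol{\vartheta})\ge 1$. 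Taking $\boldsymbol{\vartheta}=\boldsymbol{\theta}$ completes the argument.

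The step I expect to be the main obstacle is the second point, together with the non-singularity clause in the first: that is, the function-field inhomogeneous Dirichlet theorem for non-singular matrices, the covering-radius estimate it rests on, and the transference fact that a singular matrix always admits an over-approximable shift. These are classical over $\mathbb{R}$ and underpin the Beresnevich--Velani transfer of \cite{BV}; in positive characteristic the ingredients — the non-archimedean geometry of numbers of \cite{M} and the dynamical dictionary of \cite{KT,GG-Sprind} — are available, but the precise statements, especially ``finite inhomogeneous exponents at all shifts $\Rightarrow$ non-divergence of $\{g_{t}u_{Y}\Lambda^{m+n}\}_{t}$'', must be assembled with care. Alternatively one can bypass the dynamics and argue directly with the lattice $\{(Y\mathbf{q}+\mathbf{p},\mathbf{q})\}\subset F^{m+n}$ and the compact-open boxes of $F^{m+n}$, using that non-singularity of $Y$ makes $Y\Lambda^{n}+\Lambda^{m}$ quantitatively dense in $F^{m}$.
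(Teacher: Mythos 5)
Your upper bound coincides with the paper's: by \eqref{equ:mr1}, $\omega(Y,\boldsymbol{\theta})\le\omega^{\times}(Y,\boldsymbol{\theta})=1$ $\mu$-a.e. For the lower bound the paper takes a much shorter route than the one you sketch: from the hypothesis one gets $\omega(Y)=1$ $\mu$-a.e., then $\omega(Y^t)=1$ $\mu$-a.e.\ by Dyson's transference (Theorem~\ref{thm:dyson}), then $\hat\omega(Y^t)=1$ $\mu$-a.e.\ from \eqref{equ:hw} (Dirichlet forces $\hat\omega\ge 1$ and $\hat\omega\le\omega$), and finally $\omega(Y,\boldsymbol{\theta})\ge 1/\hat\omega(Y^t)=1$ by the function-field Bugeaud--Zhang inequality (Theorem~\ref{thm:bz}), which holds for every shift $\boldsymbol{\theta}$. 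All the geometry-of-numbers machinery you propose to deploy is exactly what Theorem~\ref{thm:bz} packages; the paper simply cites it. This is Proposition~\ref{prop:ext_w_W}.

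There is also a concrete gap in your argument. To run the covering-radius step you need, for $\mu$-a.e.\ $Y$, a uniform lower bound $\kappa(Y)>0$ on the shortest vector of $g_t u_Y\Lambda^{m+n}$ along an unbounded sequence of $t$, i.e.\ non-divergence of the orbit (non-singularity of $Y$). You attempt to deduce this by arguing that a singular $Y$ would admit a shift $\boldsymbol{\vartheta}_Y$ with $\omega(Y,\boldsymbol{\vartheta}_Y)=\infty$, ``against the bound just obtained.'' But the bound you have — $\omega(Y,\boldsymbol{\vartheta})\le 1$ for $\mu$-a.e.\ $Y$ — holds only for each \emph{fixed} $\boldsymbol{\vartheta}$, with the $\mu$-null exceptional set depending on $\boldsymbol{\vartheta}$, whereas the bad shift $\boldsymbol{\vartheta}_Y$ varies with $Y$. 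As there are uncountably many shifts, one cannot conclude from this that the set of singular $Y$ is $\mu$-null; the quantifier order is wrong. Moreover, the hypothesis genuinely is weaker than non-singularity: it yields $\hat\omega(Y^t)=1$, and there exist singular matrices with $\hat\omega=1$. So the uniform constant $C(\kappa,m+n)$ in your covering-radius argument is simply not available. The way to close the gap is precisely what Bugeaud--Zhang encodes: $\hat\omega(Y^t)=1$ gives a subexponential (not bounded) control on the covering radius, which after careful bookkeeping still yields $\omega(Y,\boldsymbol{\theta})\ge 1$; rather than rederive this, cite Theorem~\ref{thm:bz} as the paper does.
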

  As mentioned earlier, in the homogeneous case, both the simultaneous and dual forms of Diophantine approximation lead us to the same notion of extremality due to Khintchine's transference principle. As there is no Khitchine's transference principle in the inhomogeneous case, one has to deal with simultaneous and dual forms of Diophantine approximation separately.
\begin{definition}
    A measure $\mu$ supported on a subset of $F^n$ is said to be \emph{dually inhomogeneously (strongly) extremal} if $\mu$ is inhomogeneously (strongly) extremal on $F^{1 \times n}$. We call $\mu$ \emph{simulteneously inhomogeneously (strongly) extremal} if $\mu$ is inhomogeneously (strongly) extremal on $F^{n \times 1}.$ Furthermore we say that $\mu$ is \emph{inhomogeneously (strongly) extremal} if $\mu$ is both dually and simultaneously inhomogeneously (strongly) extremal.
    
\end{definition}
In \cite{GG-Sprind}, Ganguly and Ghosh proved the inhomogeneous Sprind\v{z}uk conjecture over a field of positive characteristics.
\begin{theorem}[Theorem $1.1$, \cite{GG-Sprind}]\label{thm:GG}
     Let $\mathcal{M}$ be an analytic nondegenerate submanifold of $F^n.$ Then $\mathcal M$ is inhomogeneously extremal.
\end{theorem}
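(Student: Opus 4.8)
The plan is to derive Theorem~\ref{thm:GG} as a consequence of the inhomogeneous transference principle developed in this paper, following the archimedean strategy of Beresnevich and Velani: rather than analyzing the inhomogeneous exponent $\omega(Y,\boldsymbol{\theta})$ directly, I would first promote the \emph{homogeneous} strong extremality of $\mathcal{M}$ to \emph{inhomogeneous} strong extremality, and only then invoke Proposition~\ref{prop:inhomo_st-ext_imply_inhomo_ext}.

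\emph{Step 1.} The natural measure $\mu$ on an analytic nondegenerate submanifold $\mathcal{M}\subset F^n$, i.e.\ the pushforward of (the restriction of) Haar measure under an analytic nondegenerate parametrization, is a friendly measure on $F^n$; this is the positive-characteristic analogue of the Kleinbock-Lindenstrauss-Weiss criterion, and it is exactly the class of measures to which Theorem~\ref{thm:KLW_fun} applies. Hence $\mu$ is strongly extremal: for every $\varepsilon>0$ the set $\{Y:\omega^{\times}(Y)>1+\varepsilon\}$ is $\mu$-null, whether $\mathcal{M}$ is identified with a subset of $F^{n\times 1}$ (simultaneous form) or of $F^{1\times n}$ (dual form).

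\emph{Step 2.} For every shift $\boldsymbol{\theta}$ one must upgrade this to $\omega^{\times}(Y,\boldsymbol{\theta})=1$ for $\mu$-almost every $Y$. I would realize both the homogeneous and the inhomogeneous ``$\omega^{\times}>1+\varepsilon$'' events as $\limsup$ sets over a common family of balls and apply the function field form of the Beresnevich-Velani inhomogeneous transference principle built earlier in the paper; checking its hypotheses amounts to verifying that $\mu$ is ``contracting'' relative to this family, which in turn follows from the Federer (doubling) and absolutely decaying properties encoded in the definition of a friendly measure, the ultrametric structure of $F$ simplifying several of the covering and overlap estimates. \emph{Step 3.} Once $\mu$ is known to be inhomogeneously strongly extremal, Proposition~\ref{prop:inhomo_st-ext_imply_inhomo_ext} immediately yields that $\mu$ is inhomogeneously extremal, i.e.\ $\omega(Y,\boldsymbol{\theta})=1$ for $\mu$-almost every $Y$ and every $\boldsymbol{\theta}$, which is precisely the assertion of Theorem~\ref{thm:GG}.

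The main obstacle is Step 2: one must ensure that the function field transference principle genuinely covers the \emph{multiplicative} exponent $\omega^{\times}$ (and not merely its non-multiplicative analogue) and the fully general non-compact shift $\boldsymbol{\theta}$, and one must verify the contracting property for the natural measure on $\mathcal{M}$ --- both of which rest on the absolutely decaying and Federer properties of friendly measures. By comparison, Step 1 is the positive-characteristic friendliness of natural measures on nondegenerate manifolds (the same input that underlies Theorem~\ref{thm:KLW_fun}), and Step 3 is a direct application of Proposition~\ref{prop:inhomo_st-ext_imply_inhomo_ext}.
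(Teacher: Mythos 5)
Your proposal is correct and coincides with the derivation that this paper's machinery supplies: friendliness of the natural measure on a nondegenerate analytic manifold plus Theorem~\ref{thm:KLW_fun} gives strong extremality, Corollary~\ref{cor:fri_imply_contr} gives the strongly contracting property needed for Theorem~\ref{thm:main}\ref{thm:main_B}, and Proposition~\ref{prop:inhomo_st-ext_imply_inhomo_ext} then downgrades inhomogeneous strong extremality to inhomogeneous extremality. Two small remarks. First, the paper itself does not re-prove Theorem~\ref{thm:GG}; it recalls it as a prior result of Ganguly--Ghosh, and the original proof there goes directly at the non-multiplicative exponent rather than through the multiplicative one --- what you have reconstructed is the \emph{new} route that this paper's transference framework makes available (and which, through the unlabeled theorem on inhomogeneous strong extremality, actually yields a strengthening). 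Second, if one only wants Theorem~\ref{thm:GG} as stated, the detour through $\omega^{\times}$ and Proposition~\ref{prop:inhomo_st-ext_imply_inhomo_ext} is avoidable: one can instead use $\mathcal{M}$ extremal (a consequence of Theorem~\ref{thm:G}), the fact that the natural measure is contracting almost everywhere (Corollary~\ref{cor:fri_imply_contr}), and Theorem~\ref{thm:main}\ref{thm:main_A} directly; this shortcut does not require verifying the transference principle for the multiplicative set-up. Either route is sound, and your verification that the contracting property rests on the Federer and decaying conditions built into friendliness is exactly what Theorem~\ref{thm:friend_d_contr} encodes.
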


\subsection*{Acknowledgements}  We are indebted to the Department of Mathematics and Statistics, Indian Institute of Technology Kanpur, for providing the research friendly atmosphere. The first named author is also thankful to the International Centre  for Theoretical Sciences (ICTS), as a significant part of this project was completed during a visit (ICTS/etds-2022/12) to the same. Finally, we thank the anonymous referee for careful reading of this paper and helpful suggestions, which have improved the presentation of this paper.

\section{Main results}
\noindent In this section, we describe the main  results of this paper.
\begin{theorem}\label{thm:main}
\begin{enumerate}[label=(\Alph*)] 
    \item\label{thm:main_A} Let $\mu$ be an almost everywhere contracting measure on $F^{m \times n}$. Then
     \begin{equation*}
        \text{$\mu$ is  extremal} \iff \text{$\mu$ is inhomogeneously  extremal}.
    \end{equation*}
    \item \label{thm:main_B}  Let $\mu$ be an almost everywhere strongly contracting measure on $F^{m \times n}$. Then
     \begin{equation*}
        \text{$\mu$ is strongly extremal} \iff \text{$\mu$ is inhomogeneously strongly extremal}.
    \end{equation*}
\end{enumerate}

\end{theorem}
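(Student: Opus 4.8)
The plan is to establish the two equivalences in parallel and to reduce the whole statement to a function field version of the Beresnevich--Velani inhomogeneous transference principle \cite{BV}. The implications ``$\Leftarrow$'' in \ref{thm:main_A} and \ref{thm:main_B} cost nothing and use no hypothesis on $\mu$: setting $\boldsymbol{\theta}=\boldsymbol{0}$ in Definition~\ref{def:inhomo_ext} is literally the definition of (strong) extremality. All the content therefore lies in the forward implications; I will describe the additive case \ref{thm:main_A}, the multiplicative case \ref{thm:main_B} being entirely analogous.

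First I would fix $\boldsymbol{\theta}\in F^{m}$ and observe that, since the reverse inequality $\omega(Y,\boldsymbol{\theta})\ge1$ is valid $\mu$-almost everywhere by elementary (Minkowski-type) considerations, the forward implication comes down to showing that the super-level set $\{Y\in F^{m\times n}:\omega(Y,\boldsymbol{\theta})>1\}$ is $\mu$-null. Writing this set as $\bigcup_{j\ge1}W_{\boldsymbol{\theta}}(1+1/j)$, where $W_{\boldsymbol{\theta}}(\omega)$ denotes the set of $Y$ for which \eqref{equ:inhom_expo} admits a solution $(\mathbf{p},\mathbf{q})\in\Lambda^{m}\times(\Lambda^{n}\setminus\{0\})$ for arbitrarily large $T$, it suffices to prove $\mu(W_{\boldsymbol{\theta}}(\omega))=0$ for every $\omega>1$. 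The point is that $W_{\boldsymbol{\theta}}(\omega)$ is a $\limsup$ set, assembled from the ``slabs'' $\{Y:\|Y\mathbf{q}+\mathbf{p}+\boldsymbol{\theta}\|^{m}<e^{-\omega T}\}$ over admissible triples $(\mathbf{p},\mathbf{q},T)$ with $\|\mathbf{q}\|^{n}<e^{T}$ (and $T$ may be confined to a countable index set, since $\|\mathbf{q}\|\in e^{\mathbb{Z}}$); its homogeneous counterpart $W_{\boldsymbol{0}}(\omega')$ is the analogous $\limsup$ set of unshifted slabs, and extremality of $\mu$ says exactly that $\mu(W_{\boldsymbol{0}}(\omega'))=0$ for every $\omega'>1$. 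I would then invoke the transference principle to pass from this homogeneous null statement to $\mu(W_{\boldsymbol{\theta}}(\omega))=0$; any loss in the exponent during this passage is harmless, because the homogeneous statement is available for all $\omega'>1$.

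The heart of the matter, and the step I expect to be the main obstacle, is verifying the two hypotheses that feed into the transference principle. The \emph{intersection} hypothesis is the Diophantine input: if $Y$ has two $\boldsymbol{\theta}$-shifted solutions $(\mathbf{p},\mathbf{q})$ and $(\mathbf{p}',\mathbf{q}')$ at comparable scales $T\le T'$, then subtracting and using the ultrametric inequality gives $\|Y(\mathbf{q}-\mathbf{q}')+(\mathbf{p}-\mathbf{p}')\|\le\max\{\|Y\mathbf{q}+\mathbf{p}+\boldsymbol{\theta}\|,\|Y\mathbf{q}'+\mathbf{p}'+\boldsymbol{\theta}\|\}<e^{-\omega T}$ together with $\|\mathbf{q}-\mathbf{q}'\|<e^{T'/n}$; provided $\mathbf{q}\ne\mathbf{q}'$ and $T'/T$ is bounded by a constant strictly less than $\omega$, this is a \emph{homogeneous} approximation to $Y$ of exponent still exceeding $1$. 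Guaranteeing $\mathbf{q}\ne\mathbf{q}'$ and the comparability of consecutive scales is exactly where Dirichlet's theorem over function fields (Theorem~\ref{thm:Dirich}) is used: it supplies a uniformly spaced family of homogeneous solutions at every scale. The second, \emph{contracting}, hypothesis compares the $\mu$-measure of the shifted slabs with that of the unshifted ones, and this is precisely the ``almost everywhere contracting'' property of $\mu$ assumed in \ref{thm:main_A}. With both in hand the transference principle yields $\mu(W_{\boldsymbol{\theta}}(\omega))=0$, completing \ref{thm:main_A}.

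Part \ref{thm:main_B} would be run verbatim with the sup-norm $\|\cdot\|$ replaced throughout by the products $\prod(\cdot)$ and ${\prod}_{+}(\cdot)$ of Definition~\ref{def:mult_inhomo_expo}, with the reverse inequality $\omega^{\times}(Y,\boldsymbol{\theta})\ge1$ coming from \eqref{equ:mr1}, and with the ``almost everywhere strongly contracting'' hypothesis in place of the contracting one; the ultrametric subtraction step and the Dirichlet input carry over without change. It is worth stressing that the ultrametric inequality actually simplifies the triangle-inequality manipulations relative to the classical real case, so the genuinely new work is not the Diophantine bookkeeping but the clean formulation and proof of the abstract transference principle over $F$ --- keeping careful track of the discrete value group $e^{\mathbb{Z}}$ and of the nested structure of ultrametric balls in the covering arguments that estimate $\mu$ of the relevant neighbourhoods --- and that is where the bulk of the effort will go.
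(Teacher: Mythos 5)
Your treatment of the reverse implications ($\Leftarrow$) and your overall plan of proving the forward implication by splitting into a lower bound $\omega(Y,\boldsymbol{\theta})\ge1$ and an upper bound $\omega(Y,\boldsymbol{\theta})\le1$ handled via the Beresnevich--Velani transference machinery is the right architecture, and your description of the upper bound is, in broad strokes, what the paper does. But there is a genuine gap in how you dispose of the lower bound, and it would sink the argument as written.

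You claim that $\omega(Y,\boldsymbol{\theta})\ge 1$ holds $\mu$-almost everywhere ``by elementary (Minkowski-type) considerations.'' This is false: there is no inhomogeneous Dirichlet theorem. Already for $m=n=1$, taking $Y=0$ and $\boldsymbol{\theta}\notin\Lambda$, the quantity $\|Y\mathbf q+\mathbf p+\boldsymbol{\theta}\|=\|\mathbf p+\boldsymbol{\theta}\|$ is bounded below independently of $\mathbf q$, so $\omega(Y,\boldsymbol{\theta})=0$. More generally, the best elementary bound is Theorem~\ref{thm:bz}, which gives only $\omega(Y,\boldsymbol{\theta})\ge 1/\hat\omega(Y^{t})$, and $\hat\omega(Y^{t})$ can be strictly larger than $1$. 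The lower bound $\omega(Y,\boldsymbol{\theta})\ge1$ is therefore \emph{not} unconditional; it genuinely needs the standing hypothesis that $\mu$ is extremal. In the paper this is Proposition~\ref{prop:ext_w_W}: extremality forces $\omega(Y)=1$ for $\mu$-a.e.\ $Y$, Dyson's transference (Theorem~\ref{thm:dyson}) converts this into $\omega(Y^{t})=1$, the chain $\omega(Y^{t})\ge\hat\omega(Y^{t})\ge1$ then pins $\hat\omega(Y^{t})=1$, and only at that point does Bugeaud--Zhang give $\omega(Y,\boldsymbol{\theta})\ge1$. By calling this step elementary you have effectively discarded the only place where the homogeneous extremality hypothesis enters the lower bound; without it the claim is simply wrong.

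A secondary (less fatal) issue: in your sketch of the intersection property you appeal to Dirichlet's theorem to ``supply a uniformly spaced family of homogeneous solutions'' and to ensure $\mathbf q\ne\mathbf q'$. That is not how the intersection property works here. In the paper's formulation one subtracts two $\boldsymbol{\theta}$-shifted solutions $\alpha=(\mathbf p,\mathbf q)$ and $\alpha'=(\mathbf p',\mathbf q')$ at the \emph{same} $\mathbf t$, uses the ultrametric inequality on $a_{\mathbf t}U_{Y}(\alpha-\alpha')$, and rules out $\mathbf q=\mathbf q'$ by noting that $\mathbf q''=0$ would force $\|\mathbf p''\|<1$ and hence $\mathbf p''=0$, contradicting $\alpha\ne\alpha'$. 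No Dirichlet input and no comparison of distinct scales $T\le T'$ is needed (or even meaningful in the way the index set $\mathbf T\subset\mathbb Z^{m+n}$ is built in Proposition~\ref{prop:main}). Your version of the intersection property, phrased in terms of two different $T$'s with a bounded ratio, does not match the hypothesis \eqref{equ:inter_property} required by Theorem~\ref{thm:transf} and would need to be reworked into the single-$\mathbf t$ form before the transference theorem applies.
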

\noindent We will define \emph{(strongly) contracting measure} in the next section. Strongly contracting measures form a class of measures containing friendly measures. Hence in view of Theorem \ref{thm:main} and Corollary \ref{cor:fri_imply_contr}, we have the following inhomogeneous version of Theorem \ref{thm:KLW_fun}.
\begin{theorem}\label{thm:fri_inhomo_str_ext}
    Let $\mu$ be a friendly measure on $F^n$. Then $\mu $ is inhomogeneously strongly extremal.
\end{theorem}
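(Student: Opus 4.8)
The plan is to derive this theorem as a direct consequence of the transference principle established in Theorem~\ref{thm:main}\ref{thm:main_B}, fed with the known homogeneous input (Theorem~\ref{thm:KLW_fun}) and the structural fact — recorded in Corollary~\ref{cor:fri_imply_contr} — that every friendly measure is almost everywhere strongly contracting. So the deduction itself is short, and the real work lies in those three ingredients, all of which are available to us by assumption.

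First I would unwind the definitions: a friendly measure $\mu$ on $F^n$ is declared inhomogeneously strongly extremal precisely when it is inhomogeneously strongly extremal both as a measure on $F^{1 \times n}$ (the dual form) and as a measure on $F^{n \times 1}$ (the simultaneous form). Hence it suffices to treat $\mu$ in each of these two guises separately, and the two arguments are formally identical. Fix one of them, say $\mu$ regarded as a measure on $F^{1 \times n}$. By Theorem~\ref{thm:KLW_fun}, $\mu$ is strongly extremal, that is, $\omega^{\times}(Y) = 1$ for $\mu$-almost every $Y$. By Corollary~\ref{cor:fri_imply_contr}, $\mu$ is almost everywhere strongly contracting, so Theorem~\ref{thm:main}\ref{thm:main_B} applies to $\mu$ and gives the equivalence of strong extremality and inhomogeneous strong extremality. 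Combining these yields that for every $\boldsymbol{\theta} \in F^m$ one has $\omega^{\times}(Y,\boldsymbol{\theta}) = 1$ for $\mu$-almost every $Y$; running the same reasoning for $\mu$ on $F^{n \times 1}$ then completes the proof.

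I do not expect a genuine obstacle at this level: once Theorem~\ref{thm:main}\ref{thm:main_B} is in hand — its proof being the technical heart of the paper, namely the function-field adaptation of the Beresnevich–Velani transference machinery — and once Corollary~\ref{cor:fri_imply_contr} is established, the remaining step is purely a matter of invoking the right statements in the right order. The only point deserving a word of care is the bookkeeping between the dual and simultaneous settings: as emphasized in the introduction, there is no Khintchine transference principle in the inhomogeneous regime, so one cannot pass between $F^{1 \times n}$ and $F^{n \times 1}$ for free and must genuinely carry out the argument twice, once in each form, before concluding that $\mu$ is inhomogeneously strongly extremal in the full sense of the definition.
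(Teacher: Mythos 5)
Your proposal is correct and is essentially the same deduction the paper makes: the paper also derives this theorem in one line by combining Theorem~\ref{thm:main}\ref{thm:main_B}, Corollary~\ref{cor:fri_imply_contr}, and Theorem~\ref{thm:KLW_fun}. Your additional remark about having to run the argument separately in the dual ($F^{1\times n}$) and simultaneous ($F^{n\times 1}$) forms is a valid and accurate piece of bookkeeping, consistent with the paper's definition of inhomogeneous strong extremality for measures on $F^n$.
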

\noindent It is well known that the natural measure supported on an analytic nondegenerate manifold is friendly. Hence as an immediate consequence of Theorem \ref{thm:main}, Theorem \ref{thm:G} and Corollary \ref{cor:fri_imply_contr}, we get the inhomogeneous Baker-Sprind\v{z}uk conjecture.

\begin{theorem}
Let $\mathcal{M}$ be an analytic nondegenerate submanifold of $F^n.$ Then $\mathcal M$ is inhomogeneously strongly extremal.    
\end{theorem}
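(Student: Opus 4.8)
The plan is to deduce this theorem by assembling results that are already in place rather than by fresh analysis. First I would recall the standard fact that the natural (smooth) measure $\mu$ on an analytic nondegenerate submanifold $\mathcal{M}$ of $F^n$ is \emph{friendly}; this is the function field analogue of the Kleinbock--Lindenstrauss--Weiss statement and is precisely the input already used in the excerpt to pass from Theorem \ref{thm:KLW_fun} to Theorem \ref{thm:G_pos}. Thus it suffices to prove the statement for an arbitrary friendly measure on $F^n$, that is, to establish Theorem \ref{thm:fri_inhomo_str_ext}, and then specialize to the natural measure on $\mathcal{M}$.

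Next I would invoke Corollary \ref{cor:fri_imply_contr}, which asserts that every friendly measure on $F^n$ is almost everywhere strongly contracting. This places $\mu$ inside the hypotheses of Theorem \ref{thm:main}\ref{thm:main_B}, so that for $\mu$ the properties ``strongly extremal'' and ``inhomogeneously strongly extremal'' are equivalent. Consequently the whole problem reduces to verifying that $\mu$ is (homogeneously) strongly extremal.

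That last point is exactly Theorem \ref{thm:KLW_fun} for a general friendly measure on $F^n$, and, in the special case where $\mu$ is the natural measure on $\mathcal{M}$, it is Theorem \ref{thm:G_pos}. Here one should keep track of the convention that strong extremality of a measure on $F^n$ means strong extremality both as a measure on $F^{1\times n}$ (dual form) and as a measure on $F^{n\times 1}$ (simultaneous form); applying Theorem \ref{thm:main}\ref{thm:main_B} in each of the two identifications then yields that $\mu$ is inhomogeneously strongly extremal in the full two-sided sense, which is the assertion of the theorem.

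The genuine difficulty has of course already been absorbed into the earlier results: the main obstacle is Theorem \ref{thm:main} itself---the inhomogeneous transference principle transplanted to the function field setting, following Beresnevich and Velani---together with the verification in Corollary \ref{cor:fri_imply_contr} that friendliness implies the strongly contracting property. Once these are in hand, the present theorem is a purely formal consequence, and the only care needed is the bookkeeping between the simultaneous and dual identifications so that the conclusion is obtained for every $\boldsymbol{\theta}\in F^m$ on both $F^{1\times n}$ and $F^{n\times 1}$.
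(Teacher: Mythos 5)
Your proposal is correct and follows essentially the same route as the paper: note the natural measure on $\mathcal{M}$ is friendly, invoke Corollary \ref{cor:fri_imply_contr} to get that it is almost everywhere strongly contracting, apply Theorem \ref{thm:main}\ref{thm:main_B} to reduce to strong extremality, and conclude via Theorem \ref{thm:G_pos} (or the more general Theorem \ref{thm:KLW_fun}). The only difference is your slightly more explicit bookkeeping of the two identifications $F^{1\times n}$ and $F^{n\times 1}$, which the paper leaves implicit.
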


\noindent Furthermore we prove the upper bounds for the non-extremal case in the following sense.
\begin{theorem}\label{thm:nonextremal_upper_bound}
\begin{enumerate}[label=(\Roman*)]
\item \label{thm:nonextremal_upper_bound_i}  Let $\mu$ be a measure on $F^{m \times n},$ which is contracting almost everywhere. Suppose that $\omega(Y)=\eta$ for $\mu$-almost every $Y \in F^{m \times n}.$ Then for all $\boldsymbol{\theta} \in F^m$ we have 

$$\omega(Y,\boldsymbol{\theta}) \leq \eta \,\, \,\, \text{for $\mu$-almost every $Y \in F^{m \times n}$}.$$
\item \label{thm:nonextremal_upper_bound_ii}  Let $\mu$ be a measure on $F^{m \times n},$ which is strongly contracting almost everywhere. Suppose that $\omega^{\times}(Y)=\eta$ for $\mu$-almost every $Y \in F^{m \times n}.$ Then for all $\boldsymbol{\theta} \in F^m$ we have 

$$\omega^{\times}(Y,\boldsymbol{\theta}) \leq \eta \,\, \,\, \text{for $\mu$-almost every $Y \in F^{m \times n}$}.$$
    \end{enumerate}
\end{theorem}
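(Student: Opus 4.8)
The plan is to run the inhomogeneous transference machinery that underlies Theorem~\ref{thm:main} with a general target exponent in place of the critical value~$1$, and then pass to a limit. I will treat part~\ref{thm:nonextremal_upper_bound_i} in detail; part~\ref{thm:nonextremal_upper_bound_ii} is identical with the product-type (strongly contracting) version of the transference principle in place of the additive one.

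First I would recast the exponent condition as a statement about $\limsup$ sets. For $\boldsymbol{\theta}\in F^m$ and $\delta>0$, let $A_{\boldsymbol{\theta}}(\delta)$ denote the set of $Y\in F^{m\times n}$ for which the system $\|Y\mathbf{q}+\mathbf{p}+\boldsymbol{\theta}\|^m<e^{-(\eta+\delta)T}$, $\|\mathbf{q}\|^n<e^{T}$ has a solution $(\mathbf{p},\mathbf{q})\in\Lambda^m\times(\Lambda^n\setminus\{0\})$ for arbitrarily large $T\ge 1$; thus $A_{\boldsymbol{\theta}}(\delta)=\{Y:\omega(Y,\boldsymbol{\theta})\ge\eta+\delta\}$, and $\{Y:\omega(Y,\boldsymbol{\theta})>\eta\}=\bigcup_{k\in\N}A_{\boldsymbol{\theta}}(1/k)$. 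Hence it suffices to prove $\mu(A_{\boldsymbol{\theta}}(1/k))=0$ for every $k\in\N$. On the homogeneous side, the hypothesis that $\omega(Y)=\eta$ for $\mu$-almost every $Y$ is exactly the assertion that $\mu(A_{\boldsymbol{0}}(\delta))=0$ for every $\delta>0$.

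Next I would invoke the inhomogeneous transference principle for (almost everywhere) contracting measures --- the same statement that drives the proof of Theorem~\ref{thm:main}\ref{thm:main_A}. Adapted to $F$ in the Beresnevich--Velani formalism, it asserts that if $\mu$ is contracting almost everywhere then, for any $\boldsymbol{\theta}$ and any admissible approximating function $\psi$, the inhomogeneous $\limsup$ set attached to $\psi$ is $\mu$-null whenever the homogeneous $\limsup$ set attached to a slightly ``thickened'' approximating function is $\mu$-null, the amount of thickening being arbitrarily small in the exponent. Applying this with the function encoding exponent $\eta+\tfrac{1}{2k}$ on the homogeneous side and $\eta+\tfrac1k$ on the inhomogeneous side, the vanishing $\mu(A_{\boldsymbol{0}}(\tfrac{1}{2k}))=0$ yields $\mu(A_{\boldsymbol{\theta}}(\tfrac1k))=0$. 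Taking the union over $k$ gives $\omega(Y,\boldsymbol{\theta})\le\eta$ for $\mu$-almost every $Y$, and since $\boldsymbol{\theta}$ was arbitrary this proves~\ref{thm:nonextremal_upper_bound_i}.

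The step I expect to be the main obstacle is verifying that the transference principle is genuinely uniform in the target exponent: in the extremal setting it is only ever used at the critical exponent $1$, and one must check that its proof --- the covering/counting argument together with the contracting hypothesis --- never uses $\eta=1$ and costs only an arbitrarily small loss in the exponent. This should go through because the argument concerns $\limsup$ sets defined by arbitrary decreasing functions $\psi$, with the contracting property supplying the measure estimate that allows one to replace $\boldsymbol{\theta}$ by $\boldsymbol{0}$ up to an $\varepsilon$-distortion, and the numerical value of the critical exponent plays no role. For part~\ref{thm:nonextremal_upper_bound_ii} one repeats the argument verbatim, replacing $\|\cdot\|$ by $\prod(\cdot)$ and ${\prod}_+(\cdot)$ and ``contracting'' by ``strongly contracting'', and using the multiplicative transference principle that underlies Theorem~\ref{thm:main}\ref{thm:main_B}.
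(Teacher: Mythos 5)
Your high-level strategy matches the paper's: recast the exponent condition as a family of $\limsup$ sets and apply the inhomogeneous transference principle (Theorem~\ref{thm:transf}) with the (strongly) contracting hypothesis. But the main technical content of the paper's proof — Proposition~\ref{prop:main} — is not addressed in your proposal, and that is exactly where the parameter $\eta$ actually enters. Theorem~\ref{thm:transf} is stated for a \emph{fixed} countable index set $\mathbf{T}$, a fixed $\mathcal{A}$, and a fixed class $\Phi$; it transfers ``$\mu(\Lambda_H(\phi))=0$ for all $\phi\in\Phi$'' to ``$\mu(\Lambda_I(\phi))=0$ for all $\phi\in\Phi$'' with no built-in $\varepsilon$-loss between the two sides. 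The version you invoke, where the homogeneous side is taken at $\eta+\frac{1}{2k}$ and the inhomogeneous side at $\eta+\frac{1}{k}$, is not what the paper states or uses, and it is not obviously deducible from Theorem~\ref{thm:transf} without first doing the construction you omit.

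Concretely, the missing step is to build $\mathbf{T}\subset\mathbb{Z}^{m+n}$ so that the $\limsup$ sets $\Delta^{\boldsymbol{\theta}}_{\mathbf{T}}(\phi^{\tau})$, as $\tau>0$ varies, exhaust $\{Y:\omega^{\times}(Y,\boldsymbol{\theta})>\eta\}$ (and likewise in the homogeneous case), while also guaranteeing the summability condition \eqref{equ:itp_conv}. The paper does this by setting $\xi=\frac{\sigma(\mathbf u)-\eta\sigma(\mathbf v)}{m+\eta n}$ and $\mathbf t=(u_1-[\xi],\dots,u_m-[\xi],v_1+[\xi],\dots,v_n+[\xi])$ for $\sigma(\mathbf u)\ge\eta\sigma(\mathbf v)$, then proving the two inclusions of \eqref{equ:ub13} using the inequalities \eqref{equ:p6i} and \eqref{equ:p7i}. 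The value $\eta$ is baked into $\xi$ and hence into the dilations $a_{\mathbf t}$; the inequalities ensuring $\sigma(\mathbf t)\to\infty$ and $\sum_{\mathbf t}e^{-\tau\sigma(\mathbf t)}<\infty$ depend delicately on this choice, and the correct mechanism is to run Theorem~\ref{thm:transf} at the fixed $\eta$ while letting $\tau$ range over $(0,\infty)$ — not to shift the target exponent between the two sides. You correctly flag the ``uniformity in the target exponent'' as the obstacle, but this is not a check one performs on an existing theorem; it is the entire content of the proof.
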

\noindent The above theorem is the function field analogue of \cite[Theorem 2.2]{GM}.

\section{Strongly contracting and friendly measures}\label{section:cont_meas}

 We retain the notations and terminologies of Beresnevich and Velani from \cite{BV}. Let $X$ be a metric space and $B \subseteq X$ be a ball. For $a>0,$ $aB$ denotes the ball with the same centre as $B$ and radius $a$ times the radius of $B$. We say that a measure $\mu$ on $X$ is \emph{non-atomic} if $\mu(\{x\})=0$ for any $x \in X.$ The \emph{support} of a measure is defined to be the smallest closed set $S$ such that $\mu(X \setminus S)=0.$ Also say that $\mu$ is \emph{doubling} if there exists a constant $c >0$ such that for any ball $B$ with centre in $S$
 \begin{equation*}
     \mu(2B) \leq c \mu(B).
 \end{equation*}
Consider the plane 
\begin{equation}
    \mathcal{L}_{\mathbf{b},\mathbf{c}} := \{Y \in F^{m \times n}: Y \mathbf{b} + \mathbf{c}=0\}
\end{equation}
for $\mathbf{b} \in F^n$ with $\|\mathbf{b}\|=1$ and $\mathbf{c} \in F^m.$ Now for $\bm{\varepsilon}=(\varepsilon_1,\dots,\varepsilon_m) \in (0,\infty)^m$, we define the $\bm{\varepsilon}$-neighbourhood of the plane $\mathcal{L}_{\mathbf{b},\mathbf{c}} $ by
\begin{equation}\label{equ:cont_plane}
    \mathcal{L}_{\mathbf{b},\mathbf{c}}^{(\bm{\varepsilon})} := \{Y \in F^{m \times n}: |Y_i \mathbf{b} + c_i|< \varepsilon_i \,\, \forall \,\, i=1,\dots,m\},
\end{equation}
where $Y_i$ is the $i$-th row of $Y.$ If $\varepsilon_1=\dots=\varepsilon_m=\varepsilon, $ we simply denote it by $ \mathcal{L}_{\mathbf{b},\mathbf{c}}^{(\varepsilon)}.$
\begin{definition}
    Let $\mu$ be a finite, non-atomic and doubling Borel measure on $F^{m \times n}.$ Then $\mu$ is said to be \emph{strongly contracting} if there exists $C,\alpha,r_0>0$ such that for any plane $\mathcal{L}_{\mathbf{b},\mathbf{c}},$ any $\bm{ \varepsilon } =(\varepsilon_1,\dots,\varepsilon_m) \in (0,\infty)^m$ with $\min \{\varepsilon_j : 1 \leq j \leq m\} < r_0$ and any $0<\delta<1,$ the following property is satisfied: $\forall \,\, Y \in  \mathcal{L}_{\mathbf{b},\mathbf{c}}^{(\delta \bm{\varepsilon})} \cap S$ there is an open ball $B$ centred at $Y$ such that
    \begin{equation}\label{equ:cm_pi}
        B \cap S \subset  \mathcal{L}_{\mathbf{b},\mathbf{c}}^{(\bm{\varepsilon})}
    \end{equation}
    and
    \begin{equation}\label{equ:cm_pii}
        \mu(5B \cap  \mathcal{L}_{\mathbf{b},\mathbf{c}}^{(\delta \bm{\varepsilon})}) \leq C \delta^{\alpha} \mu(5B ).
    \end{equation}
  We say that the measure $\mu$ is contracting if the same property holds with $\varepsilon_1 = \dots = \varepsilon_m =\varepsilon.$  
\end{definition}
\noindent Furthermore we say that $\mu$ is (strongly) contracting almost everywhere if for $\mu$-almost every point $Y_0 \in F^{m \times n}$ there is a neighbourhood $V$ of $Y_0$ such that the restriction $\mu|_V$ of $\mu$ to $V$ is (strongly) contracting. 

First of all, note that a measure $\mu$ is strongly contracting implies that $\mu$ is contracting.
Next, we recall the notion of friendly measures from \cite{KLW} and \cite{KT}. For this, we first need to define nonplanarity and $(C,\alpha)$-decaying of a measure $\mu.$  Let $\mu$ be a Borel merasure on $F^n$ and $S:= \text{the support of $\mu$}.$ We say $\mu$ is non-planar if for any hyperplane $\mathcal L,$ one has $\mu(\mathcal L )=0.$ Given a hyperplane $\mathcal L$ and a ball $B$ of $F^n$ with $\mu(B) >0,$ we define $\|d_{\mathcal{L}}\|_{\mu,B}$ to be the supremum of $\text{dist}(\mathbf{y},\mathcal{L})$ over $\mathbf{y} \in S \cap B,$ where $\text{dist}(\mathbf{y},\mathcal{L})= \inf \{\|\mathbf{y}-\mathbf{l}\|: \mathbf{l} \in \mathcal{L}\} .$ Given an open set V of $F^n$ and $C,\alpha>0$, we say that the measure $\mu$ is $(C,\alpha)$-decaying on $V$ if for any non-empty open ball $B \subset V$ centred in $S,$ any affine hyperplane $\mathcal{L}$ of $F^n$ and any $\varepsilon >0$ one has that
\begin{equation}
    \mu(B \cap \mathcal{L}^{(\varepsilon)}) \leq C \left(\frac{\varepsilon}{\|d_{\mathcal{L}}\|_{\mu,B}}\right)^{\alpha} \mu(B),
\end{equation}
where $\mathcal L^{(\varepsilon)}$ is the $\varepsilon$-neighbourhood of $\mathcal L$
\begin{definition}
   Let $\mu$ be a non-atomic Borel measure on $F^n.$ We say that the measure $\mu$ is friendly  if for $\mu$-almost  every point  $\mathbf{y}_0 \in F^n$, there exists a neighbourhood $V$ of $\mathbf{y}_0$ such that the restriction $\mu|_V$ of $\mu$ to $V$ is finite, doubling, non-planar and $(C,\alpha)$-decaying for some $C,\alpha>0.$ 
\end{definition}
Before going further let us introduce the notion of  $d$-contracting measure on $F^n$ following \cite{BV}, which generalizes the notion of contracting measure on $F^n$ (identified with $F^{1 \times n}$ or $F^{n \times 1}$).
\begin{definition}
    Let $d \in \mathbb N$ with $0 \leq d \leq n-1.$ A finite, non-atomic and doubling Borel measure on $F^n$ is said to be $d$-contracting if it satisfies the conditions \eqref{equ:cm_pi} and \eqref{equ:cm_pii} of contracting measure with the plane $\mathcal{L}_{\mathbf{b},\mathbf{c}}$ replaced by any $d$-dimensional plane $\mathcal{L}.$
\end{definition}
It is easy to note that a  contracting measure on $F^{1 \times n}$ is essentially same as a $0$-contracting measure on $F^n$  and  a contracting measure on $F^{n \times 1}$ is essentially same as a $(n-1)$-contracting measure on $F^n.$ We show that any friendly measure on $F^n$ is $d$-contracting for any $ 0 \leq d \leq n-1.$
\begin{theorem}\label{thm:friend_d_contr}
    Let $d \in \mathbb N $ with $0 \leq d \leq n-1.$ Then any friendly measure on $F^n$ is $d$-contracting almost everywhere.
\end{theorem}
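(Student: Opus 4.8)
The plan is to deduce the two defining inequalities \eqref{equ:cm_pi} and \eqref{equ:cm_pii} of a $d$-contracting measure directly from the four properties bundled into friendliness, working locally. As being $d$-contracting a.e.\ is a local/a.e.\ statement, I would fix $\mathbf{y}_0$ with a neighbourhood $V_0$ on which $\mu|_{V_0}$ is finite, doubling, non-planar and $(C,\alpha)$-decaying, then pick a clopen ball $V=B(\mathbf{y}_0,r_1)\subseteq V_0$ and show $\mu|_V$ is $d$-contracting with the same $C,\alpha$ and a radius $r_0$ fixed below; finiteness, non-atomicity and doubling of $\mu|_V$ are inherited from $\mu|_{V_0}$. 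Write $S_V:=\supp(\mu|_V)=S\cap V$, which is compact since balls in $F^n$ are compact, and note that \eqref{equ:cm_pi} is essentially automatic from the ultrametric inequality $\dist(\mathbf{z},\mathcal{L})\le\max\{\|\mathbf{z}-\mathbf{y}\|,\dist(\mathbf{y},\mathcal{L})\}$: a small enough ball about any $\mathbf{y}\in\mathcal{L}^{(\delta\varepsilon)}$ will do.

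The crucial preliminary is a quantitative non-planarity bound: for any finite non-zero Borel measure $\nu$ on $F^n$ with compact support $K$ satisfying $\nu(\mathcal{H})=0$ for every affine hyperplane $\mathcal{H}$, the number $\beta_0(\nu):=\inf_{\mathcal{H}}\sup_{\mathbf{z}\in K}\dist(\mathbf{z},\mathcal{H})$ (infimum over affine hyperplanes) is strictly positive. I would prove this by compactness: if $\beta_0=0$, choose $\mathcal{H}_j=\{\mathbf{a}_j\cdot\mathbf{x}=b_j\}$ with $\|\mathbf{a}_j\|=1$ and $\sup_{\mathbf{z}\in K}|\mathbf{a}_j\cdot\mathbf{z}-b_j|\to 0$; testing against a fixed point of $K$ shows the $b_j$ are bounded, so along a subsequence $\mathbf{a}_j\to\mathbf{a}_\ast$ (with $\|\mathbf{a}_\ast\|=1$) and $b_j\to b_\ast$, whence $K\subseteq\{\mathbf{a}_\ast\cdot\mathbf{x}=b_\ast\}$, contradicting non-planarity. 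Applying this to $\nu=\mu|_V$ (non-planar since $\mu|_V\le\mu|_{V_0}$), set $r_0:=\beta_0(\mu|_V)>0$; since $S_V\subseteq B(\mathbf{y}_0,r_1)$ one has $r_0\le r_1$.

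For the main argument, take a $d$-plane $\mathcal{L}$ with $0\le d\le n-1$, scalars $0<\varepsilon<r_0$ and $0<\delta<1$, and a point $\mathbf{y}\in\mathcal{L}^{(\delta\varepsilon)}\cap S_V$. Taking any affine hyperplane containing $\mathcal{L}$ and using $\varepsilon<r_0=\beta_0$, there is $\mathbf{z}^\ast\in S_V$ with $\dist(\mathbf{z}^\ast,\mathcal{L})\ge\beta_0>\varepsilon$, so $\mathbf{z}^\ast\notin\mathcal{L}^{(\varepsilon)}$. Let $e^{k^\ast}$ be the largest scale for which $B(\mathbf{y},e^{k^\ast})\cap S_V\subseteq\mathcal{L}^{(\varepsilon)}$ (well defined: $\mathbf{y}\in\mathcal{L}^{(\delta\varepsilon)}\subseteq\mathcal{L}^{(\varepsilon)}$ and $\mathbf{z}^\ast\in S_V\setminus\mathcal{L}^{(\varepsilon)}$), and set $B:=B(\mathbf{y},e^{k^\ast})$, so $B\cap S_V\subseteq\mathcal{L}^{(\varepsilon)}$, i.e.\ \eqref{equ:cm_pi} holds. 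Because $e<5<e^2$ we have $5B=B(\mathbf{y},e^{k^\ast+1})$; maximality of $k^\ast$ gives $\mathbf{z}_0\in 5B\cap S_V$ with $\dist(\mathbf{z}_0,\mathcal{L})\ge\varepsilon$, and $\mathbf{z}^\ast\notin B$ forces $\|\mathbf{z}^\ast-\mathbf{y}\|\ge e^{k^\ast+1}$, while $\|\mathbf{z}^\ast-\mathbf{y}\|\le\diam(S_V)\le r_1$, so $e^{k^\ast+1}\le r_1$ and therefore $5B\subseteq B(\mathbf{y}_0,r_1)=V\subseteq V_0$. Next I would invoke the purely geometric fact that a $d$-plane $\mathcal{L}$ and a point $\mathbf{z}_0$ always admit an affine hyperplane $\mathcal{H}\supseteq\mathcal{L}$ with $\dist(\mathbf{z}_0,\mathcal{H})=\dist(\mathbf{z}_0,\mathcal{L})$: since $\GL_n(\cO)$ ($\cO$ the valuation ring of $F$) acts on $F^n$ by sup-norm isometries and carries any $d$-plane to a coordinate plane $\{z_{d+1}=c_{d+1},\dots,z_n=c_n\}$ (the associated $\cO$-lattice being saturated, hence a direct summand of $\cO^n$), this reduces to choosing $\mathcal{H}=\{z_{i_0}=c_{i_0}\}$ for an index $i_0>d$ realising $\max_{i>d}|z_{0,i}-c_i|$. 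For such $\mathcal{H}$ one gets $\|d_{\mathcal{H}}\|_{\mu|_{V_0},\,5B}\ge\dist(\mathbf{z}_0,\mathcal{H})=\dist(\mathbf{z}_0,\mathcal{L})\ge\varepsilon$, because $\mathbf{z}_0\in 5B\cap\supp(\mu|_{V_0})$; then the $(C,\alpha)$-decay of $\mu|_{V_0}$, applied to the ball $5B\subseteq V_0$, the hyperplane $\mathcal{H}$ and the parameter $\delta\varepsilon$, combined with $\mathcal{L}^{(\delta\varepsilon)}\subseteq\mathcal{H}^{(\delta\varepsilon)}$ and $5B\subseteq V$, yields $\mu|_V(5B\cap\mathcal{L}^{(\delta\varepsilon)})\le\mu|_{V_0}(5B\cap\mathcal{H}^{(\delta\varepsilon)})\le C(\delta\varepsilon/\|d_{\mathcal{H}}\|_{\mu|_{V_0},\,5B})^{\alpha}\mu(5B)\le C\delta^{\alpha}\mu|_V(5B)$, which is \eqref{equ:cm_pii}. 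Since $d\in\{0,\dots,n-1\}$ was arbitrary, this completes the proof.

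The place where the hypotheses really bite — and the step I expect to be the main obstacle — is excluding the degenerate configuration in which $S_V$ is entirely contained in $\mathcal{L}^{(\varepsilon)}$: then no support point inside $5B$ is far from $\mathcal{L}$, the ratio $\delta\varepsilon/\|d_{\mathcal{H}}\|_{\mu|_{V_0},\,5B}$ cannot be controlled by $\delta$, and \eqref{equ:cm_pii} actually fails for small $\delta$. This is exactly what the quantitative non-planarity bound $\beta_0>0$, together with the choice $r_0=\beta_0$, is designed to prevent, so making that bound rigorous — the compactness argument above, exploiting that the support is bounded and the space of hyperplanes meeting it is compact — is the technical heart of the matter. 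A secondary but essential bookkeeping point is verifying $5B\subseteq V$ (this is what the inequality $e^{k^\ast+1}\le r_1$ gives), so that $\mu|_V$, $\mu|_{V_0}$ and $\mu$ agree on every set occurring in the estimate.
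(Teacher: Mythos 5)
Your proof is correct and follows the same core strategy as the paper: use non-planarity to obtain a scale $r_0$ below which no $\mathcal{L}^{(\varepsilon)}$ can swallow the support, inflate a ball about $\mathbf{y}$ until $5B$ escapes $\mathcal{L}^{(\varepsilon)}$, extract from the escape point a hyperplane against which $\|d_{\mathcal{H}}\|_{\mu,5B}\ge\varepsilon$, and then feed $(C,\alpha)$-decay the parameter $\delta\varepsilon$. Two implementation points differ, and both are worth recording. First, the paper produces the auxiliary hyperplane much more cheaply: if $\mathcal{L}=\bigcap_{i=1}^{s}\{\mathbf{b}_i\cdot\mathbf{y}+a_i=0\}$ with $\|\mathbf{b}_i\|=1$ and the escape point $\mathbf{y}'$ has $|\mathbf{b}_l\cdot\mathbf{y}'+a_l|\ge\varepsilon$ for some $l$, then the defining hyperplane $\mathcal{L}_0=\{\mathbf{b}_l\cdot\mathbf{y}+a_l=0\}$ already satisfies $\dist(\mathbf{y}',\mathcal{L}_0)\ge\varepsilon$ by the ultrametric inequality, and the inclusion $\mathcal{L}^{(\delta\varepsilon)}\subseteq\mathcal{L}_0^{(\delta\varepsilon)}$ is built into the definition of $\mathcal{L}^{(\delta\varepsilon)}$ as an intersection. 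This sidesteps your $\GL_n(\mathcal{O})$/saturated-lattice construction of an optimal $\mathcal{H}\supseteq\mathcal{L}$ and also avoids having to identify $\mathcal{L}^{(\varepsilon)}$ with the metric neighbourhood $\{\dist(\cdot,\mathcal{L})<\varepsilon\}$, which in your argument is used implicitly both when you conclude $\mathbf{z}^\ast\notin\mathcal{L}^{(\varepsilon)}$ and when you assert $\mathcal{L}^{(\delta\varepsilon)}\subseteq\mathcal{H}^{(\delta\varepsilon)}$ (this identification depends on the presentation of $\mathcal{L}$ and deserves a remark either way). Second, your quantitative non-planarity lemma for $\beta_0>0$ via compactness of the space of unit normals is the more careful route: the paper's justification of $r_0$ — that the support contains $n$ linearly independent points so small tubes around $d$-planes cannot contain all of them — is stated very tersely and, taken literally, is not sufficient for affine hyperplanes not through the origin; your compactness argument (or replacing "$n$ linearly independent points" by "$n+1$ affinely independent points") is what one actually needs, so this is a genuine improvement in rigour rather than merely a stylistic variant.
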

\begin{proof}
Let $d \in \mathbb N$, $0 \leq d \leq n-1 $ and $\mu$ be a friendly measure on $F^{n}.$ Then for $\mu$-almost every point $\mathbf{y}_0 \in F^n$ there exists a neighbourhood $V$ of $\mathbf{y}_0$ such that the restriction $\mu|_V$ of $\mu$ to $V$ is non-planar, finite, doubling and $(C,\alpha)$-decaying on $V$ for some $C,\alpha >0.$ For the sake of simplicity, without loss of generality we may assume that $\mu=\mu|_V.$ We need to show that  there exists $C',\alpha',r_0>0$ such that for any $d$ dimensional plane $\mathcal{L}$ of $F^n,$ any $0 <\varepsilon < r_0 $  and any $0<\delta<1$ the following property is satisfied: $\forall \,\, \mathbf{y} \in  \mathcal{L}^{(\delta {\varepsilon})} \cap S$ there is an open ball $B$ centred at $\mathbf{y}$ such that
    \begin{equation}\label{equ:i}
        B \cap S \subset  \mathcal{L}^{({\varepsilon})}
    \end{equation}
    and
    \begin{equation}\label{equ:ii}
        \mu(5B \cap  \mathcal{L}^{(\delta {\varepsilon})}) \leq C' \delta^{\alpha'} \mu(5B ), 
    \end{equation}
where $S$ is the support of $\mu.$ 

\noindent First we observe that any $d$-dimensional plane of $F^n$ is of the form
\begin{equation}
    \mathcal L =\{\mathbf y=(y_1,\dots,y_n) \in F^n : b_{i1}y_1 + \dots+b_{in} y_n + a_i=0 \,\, \text{for} \,\, i=1,\dots,s\},
\end{equation}
where $\mathbf b_i=(b_{i1},\dots,b_{in}) \in F^n$ with $\|\mathbf b_i\|=1$ and $s \in \{1,\dots,n\}.$ 

\noindent Then
\begin{equation}
    \mathcal L^{(\varepsilon)} =\{\mathbf y=(y_1,\dots,y_n) \in F^n : |b_{i1}y_1 + \dots+b_{in} y_n + a_i|< \varepsilon \,\, \text{for} \,\, i=1,\dots,s\}.
\end{equation}

Since $\mu$ is nonplanar, the support $S$ of $\mu$ contains $n$ linear independent points $\mathbf{y}_1,\dots, \mathbf{y}_n$ of $F^n.$ Hence we can find a $r_0>0$ such that whenever $0< \varepsilon < r_0$, the $\varepsilon$-neighbourhood of any $d$-dimensional plane $\mathcal L$ cannot contain all the points $\mathbf{y}_1,\dots, \mathbf{y}_n.$ Therefore for any $d$-dimensional plane $\mathcal L$ and $0 < \varepsilon < r_0,$ we have that 
\begin{equation}\label{equ:iii}
    S \not\subset \mathcal{L}^{(\varepsilon)}.
\end{equation}  
Fix $0<\delta <1$. If $S \cap \mathcal{L}^{(\delta {\varepsilon})} = \emptyset,$ then there is nothing to show. So we now assume that   $S \cap \mathcal{L}^{(\delta \bm{\varepsilon})} \neq \emptyset$ and let $\mathbf{y} \in S \cap \mathcal{L}^{(\delta {\varepsilon})} .$  Our goal is to find a ball $B$ centred at $\mathbf{y}$ such that \eqref{equ:i} and \eqref{equ:ii} holds. 

\noindent Since $\mathcal{L}^{(\delta {\varepsilon})}$ is an open set, we can find a ball $B'$ centred at $\mathbf{y}$ such that
\begin{equation}\label{equ:iv}
    B' \subset \mathcal{L}^{(\varepsilon)}.
\end{equation}
In view of \eqref{equ:iii} and \eqref{equ:iv}, there exists a real number $\gamma \geq 1$ such that
\begin{equation}\label{equ:v}
5 \gamma B' \cap S \not\subset \mathcal{L}^{(\varepsilon)} \quad \text{and} \quad \gamma B' \cap S \subset \mathcal{L}^{(\varepsilon)}.
\end{equation}
Hence there exists a point $\mathbf{y}'=(y_1',\dots,y_n') \in (5 \gamma B' \cap S) \setminus \mathcal{L}^{(\varepsilon)} $ and this implies that
\begin{equation}\label{equ:plane_greater}
    |b_{l1}y_1' + \dots+b_{ln} y_n' + a_l| \geq  \varepsilon \,\, \text{for some} \,\, l \in \{1,\dots,s\}.
\end{equation}
Now we define the hyperplane
\begin{equation}
    \mathcal L_0 =\{\mathbf y=(y_1,\dots,y_n) \in F^n : b_{l1}y_1 + \dots+b_{ln} y_n + a_l=0 \}.
\end{equation}
Then in view of \eqref{equ:plane_greater} and using the ultrametric triangle inequality, we have $\text{dist}(\mathbf{y}',\mathcal{L}_0) \geq \varepsilon.$
 
Take $B=5\gamma B'.$ Then from above discussion it is clear that  $\|d_{\mathcal{L}_0}\|_{\mu,B} \geq \varepsilon .$ Now by applying the fact that $\mu$ is $(C,\alpha)$-decaying with $\delta \varepsilon$ in place of $\varepsilon,$ we get that
\begin{equation}
\mu(5\gamma B' \cap  \mathcal{L}^{(\delta {\varepsilon})}) \leq  \mu(5\gamma B' \cap  \mathcal{L}_{0}^{(\delta {\varepsilon})})  \leq C \left(\frac{\delta \varepsilon}{\varepsilon} \right)^{\alpha}  \mu(5 \gamma B')=   C \delta^{\alpha} \mu(5\gamma B' ).
\end{equation}
Therefore the ball $\gamma B'$ satisfies conditions \eqref{equ:i} and \eqref{equ:ii}. This implis that $\mu$ is $d$-contracting.
\end{proof}

As a natural consequence of the above theorem, we get the following:
\begin{corollary}\label{cor:fri_imply_contr}

\begin{enumerate}[label=(\arabic*)]
    \item Any friendly measure $\mu$ on $F^{m \times 1}$ is (strongly) contracting almost everywhere.
    \item Any friendly measure $\mu$ on $F^{1 \times n}$ is (strongly) contracting almost everywhere.
\end{enumerate}
\end{corollary}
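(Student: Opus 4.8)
\textbf{Proof proposal for Corollary \ref{cor:fri_imply_contr}.}

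The plan is to deduce both statements directly from Theorem \ref{thm:friend_d_contr} by invoking the two degenerate identifications of $F^n$ with spaces of matrices that were observed just before that theorem, namely that a contracting measure on $F^{1 \times n}$ is essentially the same as a $0$-contracting measure on $F^n$, and a contracting measure on $F^{n \times 1}$ is essentially the same as an $(n-1)$-contracting measure on $F^n$. (Here the ambient dimension in the first part of the corollary should read $n$, with the matrix space $F^{n \times 1}$.) Fix a friendly measure $\mu$ on $F^n$. Applying Theorem \ref{thm:friend_d_contr} with $d=0$ shows $\mu$ is $0$-contracting almost everywhere, which under the identification $F^n \cong F^{1 \times n}$ is precisely the assertion that $\mu$ is contracting almost everywhere as a measure on $F^{1 \times n}$; applying it with $d=n-1$ and the identification $F^n \cong F^{n \times 1}$ gives the corresponding statement for $F^{n \times 1}$. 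This disposes of the "contracting" half of each item.

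For the "strongly contracting" half I would observe that when $m=1$ the plane $\mathcal L_{\mathbf b,\mathbf c}$ has a single defining inequality, so the vector parameter $\bm\varepsilon=(\varepsilon_1,\dots,\varepsilon_m)$ collapses to a single scalar $\varepsilon$, and the condition $\min\{\varepsilon_j\}<r_0$ becomes simply $\varepsilon<r_0$. Consequently, in the case of $F^{m\times n}$ with $m=1$ — that is, for $F^{1 \times n}$ — the definitions of contracting and strongly contracting measure literally coincide, so "$\mu$ is contracting almost everywhere" and "$\mu$ is strongly contracting almost everywhere" are the same statement and part (2) follows from what was just proved. For part (1), concerning $F^{n \times 1}$ (where the number of rows is $n$, not $1$), the vector parameter genuinely has $n$ components; here I would either note that for column matrices the single defining vector $\mathbf b \in F^1$ has $\|\mathbf b\|=1$, forcing $\mathbf b$ to be a unit, so each of the $n$ inequalities $|Y_i b + c_i|<\varepsilon_i$ is equivalent to $|y_i - (-c_i/b)|<\varepsilon_i$, and then run through the proof of Theorem \ref{thm:friend_d_contr} keeping the anisotropic neighbourhood throughout: the key geometric input there — that the $(C,\alpha)$-decaying property detects a single coordinate hyperplane $\mathcal L_0$ on which $\|d_{\mathcal L_0}\|_{\mu,B}$ is bounded below by the relevant $\varepsilon_l$ — goes through verbatim with $\varepsilon$ replaced by $\varepsilon_l := \min\{\varepsilon_j\}$ (or by the specific coordinate witnessing \eqref{equ:plane_greater}), since $(C,\alpha)$-decaying is stated for arbitrary affine hyperplanes.

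The only mild obstacle is bookkeeping in the anisotropic ($\bm\varepsilon$ a genuine vector) case of $F^{n\times 1}$: one must check that choosing $B'$ with $B'\subset \mathcal L^{(\bm\varepsilon)}$ is still possible (it is, since $\mathcal L^{(\delta\bm\varepsilon)}\subset\mathcal L^{(\bm\varepsilon)}$ is open and contains $\mathbf y$), and that after scaling to $B=5\gamma B'$ the point $\mathbf y'$ violating one of the $n$ inequalities yields $\|d_{\mathcal L_0}\|_{\mu,B}\ge \varepsilon_l$ for the corresponding index $l$; the decay estimate then gives the bound $C(\delta\varepsilon_l/\varepsilon_l)^\alpha = C\delta^\alpha$ with the $\varepsilon_l$'s cancelling, exactly as in the isotropic case. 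No new ideas beyond Theorem \ref{thm:friend_d_contr} are needed — the corollary is a matter of matching definitions and running the same argument with a vector of radii in place of a scalar.
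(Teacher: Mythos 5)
Your proposal is correct and follows essentially the same route as the paper: deduce the ``contracting'' half directly from Theorem~\ref{thm:friend_d_contr} via the identifications $F^{m\times 1}\cong F^m$ and $F^{1\times n}\cong F^n$, and argue that the ``strongly contracting'' half follows by running the same argument with a vector of radii, which is precisely what the paper's terse proof (``exactly similar to the proof of Theorem~\ref{thm:friend_d_contr}'') leaves to the reader. Your observation that for $F^{1\times n}$ (where $m=1$) the definitions of contracting and strongly contracting literally coincide is a clean simplification not made explicit in the paper --- it disposes of part (2) of the strongly contracting claim immediately and isolates part (1) as the only case that genuinely requires the anisotropic rerun; your sketch of that rerun (tracking the witnessing index $l$ from \eqref{equ:plane_greater} and noting that the $\varepsilon_l$'s cancel in the $(C,\alpha)$-decay estimate) is the right bookkeeping.
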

\begin{proof}
    From Theorem \ref{thm:friend_d_contr}, it follows at once that any friendly measure on $F^{m \times 1}$ or $F^{1 \times n}$ is contracting almost everywhere.

    The proof of any friendly measure on $F^{m \times 1}$ or $F^{1 \times n}$ is strongly contracting almost everywhere, is exactly similar to the proof of Theorem \ref{thm:friend_d_contr}. Hence we skip the details to interested readers.
\end{proof}

\section{Lower bounds for Diophantine exponents}\label{section:lower_bound}
Part \ref{thm:main_B} of Theorem \ref{thm:main} is trivial and to prove the other derection, we need to show that certain measure $\mu$ on $F^{m \times n}$ is inhomogeneously strongly extremal. This amounts to showing that the following two statements hold for all $\boldsymbol{\theta} \in F^m$
\begin{equation}\label{equ:ub}
    \omega^{\times}(Y, \boldsymbol{\theta}) \leq 1 \text{ for $\mu$-almost every } Y \in F^{m \times n},
\end{equation}

and 
\begin{equation}\label{equ:lb}
    \omega^{\times}(Y, \boldsymbol{\theta}) \geq 1 \text{ for $\mu$-almost every }Y \in F^{m \times n}.
\end{equation}
Proving Part \ref{thm:main_A} of Theorem \ref{thm:main}, amounts to showing analogous upper and lower bounds for $\omega(Y,\boldsymbol{\theta}).$ We devote this section to prove \eqref{equ:lb}, i.e., the lower bound for inhomogeneous strong extremality. Also, we will see that as a by-product we get the lower bound for inhomogeneous extremality. Before going to the proof of this result, we recollect a few results related to the Diophantine transference principle over function fields. First, we recall the following result by  Bugeaud and Zhang (\cite{BZ}), which are the function field analogue of a result by Bugeaud and
Laurent (\cite{BL}).
\begin{theorem}\label{thm:bz}
    Let $Y \in F^{m \times n}$. Then for all $\boldsymbol{\theta} \in F^m,$ we have the following
    \begin{equation}\label{equ:bz_func}
        \omega(Y,\boldsymbol{\theta}) \geq \frac{1}{\hat{\omega}({Y}^t)} \text{ and } \quad \hat{\omega}(Y,\boldsymbol{\theta}) \geq \frac{1}{\omega(Y^t)}. 
    \end{equation}
   Furthermore, equality occurs in \eqref{equ:bz_func} for almost all $\boldsymbol{\theta} \in F^m$.
\end{theorem}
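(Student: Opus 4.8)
The plan is to transport to the function field setting the transference argument of Bugeaud and Laurent \cite{BL}, in the form already carried out over $F$ by Bugeaud and Zhang \cite{BZ}; the ultrametric inequality makes the estimates cleaner, with exact triangle inequalities and no stray constants. The algebraic backbone is the bilinear congruence
\[
\langle\mathbf u,\ Y\mathbf b+\mathbf a+\boldsymbol\theta\rangle\ \equiv\ \langle Y^t\mathbf u+\mathbf v,\ \mathbf b\rangle\ +\ \langle\mathbf u,\ \boldsymbol\theta\rangle\pmod\Lambda,
\]
valid for all $\mathbf u,\mathbf a\in\Lambda^m$ and $\mathbf v,\mathbf b\in\Lambda^n$ because $\langle\mathbf u,\mathbf a\rangle-\langle\mathbf v,\mathbf b\rangle\in\Lambda$; the remaining ingredients are the function field Dirichlet theorem (Theorem \ref{thm:Dirich}) and Mahler's geometry of numbers over $F$ \cite{M}. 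I would treat separately the two inequalities of \eqref{equ:bz_func} (the universal lower bounds) and the reverse inequalities, which need only hold for almost every $\boldsymbol\theta$.

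For $\omega(Y,\boldsymbol\theta)\ge 1/\hat\omega(Y^t)$ I would argue as in \cite{BL},\cite{BZ}: phrase inhomogeneous approximation of $(Y,\boldsymbol\theta)$ at a height $S$ (the system \eqref{equ:inhom_expo}) in terms of the successive minima of a translate $C_S+\boldsymbol\tau$, with $\boldsymbol\tau$ encoding $\boldsymbol\theta$, of the box $C_S\subset F^{m+n}$ governing the homogeneous problem for $Y$ at height $S$, and invoke the function field analogue of the inhomogeneous transference theorem (derivable from \cite{M}): the inhomogeneous minimum of $C_S$ is bounded by the reciprocal of the first minimum of the polar box $C_S^{*}$, whose short vectors are precisely the dual homogeneous approximants of $Y^t$, the quality of which across $S$ is by definition measured by $\hat\omega(Y^t)$. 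Equivalently, one can work directly with the sequence of best dual approximants $(\mathbf v^{(k)},\mathbf u^{(k)})$ of $Y^t$ and feed $\langle\mathbf u^{(k)},\boldsymbol\theta\rangle$ through the congruence above: letting $k$ run, a box principle produces $(\mathbf a,\mathbf b)\in\Lambda^m\times(\Lambda^n\setminus\{0\})$ with $\|\mathbf b\|^n<e^{S}$ and $\|Y\mathbf b+\mathbf a+\boldsymbol\theta\|^m<e^{-(1/\hat\omega(Y^t)-\varepsilon)S}$ for arbitrarily large $S$ and any $\varepsilon>0$. The companion bound $\hat\omega(Y,\boldsymbol\theta)\ge 1/\omega(Y^t)$ follows by the same scheme with the uniform and regular dual exponents interchanged, the ``for all large $T$'' clause of Definition \ref{def:inhomo_unif_expo} being replaced by the ``for arbitrarily large $T$'' of Definition \ref{def:inhomo_expo}, and the output then valid for all large $S$. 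I expect this to be the main obstacle: controlling how finely the sequence of best dual approximants of $Y^t$ resolves the torus $F^m/\Lambda^m$ at a prescribed scale, and carrying out the attendant bookkeeping in the box principle; the remaining ingredients are standard over $F$.

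It remains to prove $\omega(Y,\boldsymbol\theta)\le 1/\hat\omega(Y^t)$ and $\hat\omega(Y,\boldsymbol\theta)\le 1/\omega(Y^t)$ for $\lambda$-almost every $\boldsymbol\theta\in F^m$, with $Y$ fixed; this is the easier half, a convergence Borel--Cantelli argument. The point is that a good dual approximant $\mathbf u$ (with $\|\mathbf u\|^m<e^{T}$, $\|Y^t\mathbf u+\mathbf v\|^n<e^{-\hat w T}$, $\hat w<\hat\omega(Y^t)$) \emph{confines} the homogeneous orbit: by the congruence with $\boldsymbol\theta=\mathbf0$, every residue $Y\mathbf b\bmod\Lambda^m$ with $\|\mathbf b\|^n<e^{S}$ lies in a union of slabs about $\{\langle\mathbf u,\cdot\rangle\equiv0\}$ of total $\lambda$-measure $\asymp e^{S/n}\|Y^t\mathbf u+\mathbf v\|$. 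Hence, for $\omega'>1/\hat w$, the set of $\boldsymbol\theta$ admitting an inhomogeneous solution of quality $\omega'$ at height $S$ sits inside the $e^{-\omega' S/m}$-fattening of that union, whose $\lambda$-measure is $\asymp\max\!\big(e^{(S-\hat w T)/n},\,e^{(T-\omega' S)/m}\big)$; choosing $T$ with $S/\hat w<T<\omega' S$ (possible exactly because $\omega'>1/\hat w$) makes both terms $\le e^{-cS}$ for some $c>0$. Summing over $S$ and applying Borel--Cantelli shows the corresponding $\limsup$ set is $\lambda$-null, and a countable union over rationals $\omega'>1/\hat\omega(Y^t)$ gives the first bound for a.e.\ $\boldsymbol\theta$; the exponent $1/\hat\omega(Y^t)$ is precisely what emerges from balancing slab width against ball radius. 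The bound $\hat\omega(Y,\boldsymbol\theta)\le 1/\omega(Y^t)$ is obtained in the same way, now using the infinite sequence of dual best approximants supplied by $\omega(Y^t)$ to exhibit, for $\lambda$-a.e.\ $\boldsymbol\theta$, arbitrarily large heights $T$ at which no inhomogeneous solution of quality better than $1/\omega(Y^t)$ exists, thereby capping the uniform exponent. Combined with the inequalities of the previous paragraph, this yields equality in \eqref{equ:bz_func} for almost all $\boldsymbol\theta$.
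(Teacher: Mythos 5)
The paper contains no proof of Theorem \ref{thm:bz}: it is recalled verbatim from Bugeaud--Zhang \cite{BZ} as the function field analogue of Bugeaud--Laurent \cite{BL}, so there is no internal argument to compare yours against. Your outline does follow the same route as the cited source: the bilinear congruence $\langle\mathbf u, Y\mathbf b+\mathbf a+\boldsymbol\theta\rangle\equiv\langle Y^t\mathbf u+\mathbf v,\mathbf b\rangle+\langle\mathbf u,\boldsymbol\theta\rangle\pmod\Lambda$ is the correct backbone (the discrepancy $\langle\mathbf u,\mathbf a\rangle-\langle\mathbf v,\mathbf b\rangle$ indeed lies in $\Lambda$), and the split into universal lower bounds versus almost-everywhere upper bounds is exactly how the equality statement is obtained. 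Your Borel--Cantelli half is essentially right and close to complete: for the $\mathbf u$ supplied at height $T$ by the uniform exponent, the congruence forces $\langle\mathbf u,\boldsymbol\theta\rangle$ to lie within $\max\bigl(e^{(S-\hat wT)/n},\,e^{(T-\omega'S)/m}\bigr)$ of $\Lambda$, and the window $S/\hat w<T<\omega'S$ is nonempty precisely when $\omega'>1/\hat w$, giving summable measures.

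The genuine gap is in the first half. For the universal inequalities $\omega(Y,\boldsymbol\theta)\ge 1/\hat\omega(Y^t)$ and $\hat\omega(Y,\boldsymbol\theta)\ge 1/\omega(Y^t)$ you only name the mechanism (inhomogeneous minimum of a box bounded by the reciprocal of the first minimum of the polar box, or equivalently a box principle over best dual approximants) and then explicitly defer the step that carries all the difficulty: showing that the dual approximants of $Y^t$ of quality $\hat\omega(Y^t)$ resolve the torus $F^m/\Lambda^m$ finely enough to hit $-\boldsymbol\theta$ at the scale $e^{-(1/\hat\omega(Y^t)-\varepsilon)S/m}$ while keeping $\|\mathbf b\|^n<e^S$. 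Without that construction the two displayed inequalities of \eqref{equ:bz_func} --- which are the part of the theorem actually used in Proposition \ref{prop:ext_w_W} --- are not established, and one should also not take for granted that the real-case transference constants of \cite{BL} degenerate to the clean ultrametric form; that is verified in \cite{BZ} but not by your sketch. As it stands your proposal is a correct plan whose hardest step is acknowledged rather than executed, so the appropriate resolution here is either to carry out that step following \cite{BZ} or, as the paper does, to cite \cite{BZ} directly.
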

Next, we recall a positive characteristics version of Dyson's transference principle (\cite{D}) from \cite{GG-Sprind}.
\begin{theorem}\label{thm:dyson}
For any $Y \in F^{m \times n},$ we have the following
\begin{equation}
    \omega(Y) =1 \iff \omega(Y^t)=1.
\end{equation}
\end{theorem}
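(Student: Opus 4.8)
Since $\omega(Y)\ge 1$ and $\omega(Y^{t})\ge 1$ hold for every matrix by Dirichlet's theorem (Theorem~\ref{thm:Dirich}), the asserted equivalence is the same as the biconditional $\omega(Y)>1\iff\omega(Y^{t})>1$, and because $(Y^{t})^{t}=Y$ it suffices to prove one direction, say $\omega(Y)>1\Rightarrow\omega(Y^{t})>1$. In other words, the plan is to show that \emph{very well approximability is preserved under transposition}; this is the function field form of Khintchine's transference principle, Dyson's theorem being its optimal quantitative version. I would obtain it by running the classical geometry-of-numbers argument over $F$, using Mahler's function field geometry of numbers \cite{M}.

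Put $N=m+n$ and $\Delta_Y:=\{(Y\mathbf q+\mathbf p,\mathbf q):\mathbf p\in\Lambda^{m},\ \mathbf q\in\Lambda^{n}\}=u_Y\Lambda^{N}\subset F^{N}$, where $u_Y=\left(\begin{smallmatrix}I_m&Y\\0&I_n\end{smallmatrix}\right)\in\SL_N(F)$, so $\Delta_Y$ has covolume independent of $Y$. The first step is a Dani-type correspondence over $F$: for nonnegative integers $a,b$ with $ma=nb=:k$, set $g_{a,b}:=\diag(X^{a}I_m,X^{-b}I_n)\in\SL_N(F)$; balancing the two blocks of a vector $(Y\mathbf q+\mathbf p,\mathbf q)$ shows that $\omega(Y)>1$ iff there are a constant $c>0$ and arbitrarily large $k$ with $\|g_{a,b}\Delta_Y\|_{\min}<e^{-ck}$, where $\|\cdot\|_{\min}$ denotes the length of a shortest nonzero vector. (The value group of $|\cdot|$ is $\Z$, so the scaling is discrete and $a,b$ must be rounded; the $O(1)$ losses in the exponents are harmless for this $\limsup$ statement.)

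The second step is the transpose--dual dictionary. If $\Delta_Y^{\ast}$ is the dual lattice and $w$ the coordinate flip exchanging the first $m$ and last $n$ coordinates, then $w\,\Delta_Y^{\ast}=\Delta_{-Y^{t}}$ and $w\,g_{a,b}^{-1}\,w^{-1}=h_{b,a}:=\diag(X^{b}I_n,X^{-a}I_m)$, which is precisely the scaling adapted to the transpose problem. As $w$ is an isometry for the sup norm and sign changes are irrelevant, Mahler's duality transference $\lambda_1(\Gamma)\,\lambda_N(\Gamma^{\ast})\asymp 1$ (successive minima with respect to the unit ball) yields $\lambda_N(h_{b,a}\Delta_{Y^{t}})\asymp\|g_{a,b}\Delta_Y\|_{\min}^{-1}$. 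Combining this with Minkowski's second theorem $\prod_{i=1}^{N}\lambda_i(h_{b,a}\Delta_{Y^{t}})\asymp 1$ and the trivial bounds $\lambda_i\ge\lambda_1$ gives
\begin{equation*}
\|h_{b,a}\Delta_{Y^{t}}\|_{\min}\ \ll\ \|g_{a,b}\Delta_Y\|_{\min}^{1/(N-1)} .
\end{equation*}
Hence $\|g_{a,b}\Delta_Y\|_{\min}<e^{-ck}$ for arbitrarily large $k$ forces $\|h_{b,a}\Delta_{Y^{t}}\|_{\min}<e^{-c'k}$ for arbitrarily large $k$ with any $c'<c/(N-1)$, and reading the Dani correspondence backwards on the transpose side gives $\omega(Y^{t})>1$. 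Together with the $Y\leftrightarrow Y^{t}$ symmetry this proves $\omega(Y)=1\iff\omega(Y^{t})=1$. Keeping the two chained inequalities quantitative throughout would reproduce the function field Dyson transference inequality; only this qualitative corollary is used here.

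The step I expect to be the main obstacle is assembling the geometry-of-numbers input over $F$: one needs Minkowski's second theorem and Mahler's duality transference for the sup-norm boxes $g_{a,b}(\{\|\cdot\|\le 1\})$ with explicit, usable constants, and then a careful, sign- and index-correct version of the dictionary $\Delta_{Y^{t}}\leftrightarrow w\,\Delta_Y^{\ast}$, $h_{b,a}\leftrightarrow w\,g_{a,b}^{-1}\,w^{-1}$. In the non-archimedean setting these boxes are sub-$\Lambda$-modules of $F^{N}$, so their successive minima are attained on a $\Lambda$-basis and the relevant inequalities are essentially equalities; once the dictionary is pinned down, the remaining estimates are routine.
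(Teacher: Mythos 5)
The paper does not supply its own proof of Theorem~\ref{thm:dyson}: the statement is explicitly recalled from the reference \cite{GG-Sprind}, so there is no internal argument here to compare your proposal against. Your argument is nonetheless a correct outline of the standard geometry-of-numbers route to Dyson's (equivalently, Khintchine's) transference over $F$. All three pillars go through: the Dani-type correspondence translating $\omega(Y)>1$ into $\|g_{a,b}\Delta_Y\|_{\min}<e^{-ck}$ for arbitrarily large $k=ma=nb$; the dictionary $w\,\Delta_Y^{\ast}=\Delta_{-Y^{t}}$ together with $w\,g_{a,b}^{-1}w^{-1}=h_{b,a}$, which carries the dual lattice on the $Y$-side to the primal lattice on the $Y^{t}$-side with the correctly adapted scaling; and the Mahler/Minkowski chain
\[
\lambda_1\bigl(h_{b,a}\Delta_{Y^{t}}\bigr)\ \ll\ \lambda_N\bigl(h_{b,a}\Delta_{Y^{t}}\bigr)^{-1/(N-1)}\ \asymp\ \lambda_1\bigl(g_{a,b}\Delta_Y\bigr)^{1/(N-1)}.
\]
The obstacle you flag is in fact smaller than in the real case: over $F$ Mahler's duality $\lambda_i(\Gamma)\lambda_{N+1-i}(\Gamma^{\ast})=1$ and Minkowski's second theorem $\prod_i\lambda_i(\Gamma)=\mathrm{covol}(\Gamma)$ are exact equalities, so the implied constants are literally $1$, and the only residual loss is the $O(1)$ rounding you already mention (the parameter must run through $k\in\mathrm{lcm}(m,n)\Z_{>0}$, which is harmless for a $\limsup$ statement). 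Two items deserve explicit verification in a written-up version: the Dani correspondence over $F$ with your normalization, including the exact relation between $\omega(Y)$ and the escape rate, and the sign/block bookkeeping in identifying $w$ conjugate of the inverse-transpose of $u_Y$ with $u_{-Y^{t}}$. With those filled in, the argument is complete and is in fact quantitative, yielding the function-field Dyson inequality rather than merely the qualitative biconditional asserted in the theorem.
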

Now we are ready to prove the desired lower bound.
\begin{proposition}\label{prop:ext_w_W}
Let $\mu$ be an extremal measure on $F^{m \times n}.$ Then for every $\boldsymbol{\theta} \in F^m,$
\begin{equation*}
    \omega^{\times}(Y,\boldsymbol{\theta}) \geq \omega(Y,\boldsymbol{\theta}) \geq 1
\end{equation*}
for $\mu$-almost every $Y \in F^{m \times n}.$
\end{proposition}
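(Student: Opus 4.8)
The plan is to deduce the lower bound $\omega^{\times}(Y,\boldsymbol{\theta}) \geq \omega(Y,\boldsymbol{\theta}) \geq 1$ from the transference results already recalled, using the hypothesis that $\mu$ is extremal, i.e. $\omega(Y)=1$ for $\mu$-almost every $Y$. The first inequality $\omega^{\times}(Y,\boldsymbol{\theta}) \geq \omega(Y,\boldsymbol{\theta})$ is nothing but \eqref{equ:mr1}, which holds for every $Y$ and $\boldsymbol{\theta}$ with no hypothesis on $\mu$; so the entire content of the proposition is the second inequality $\omega(Y,\boldsymbol{\theta}) \geq 1$ for $\mu$-almost every $Y$.

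To get that, I would argue as follows. Fix $\boldsymbol{\theta} \in F^m$. By Theorem \ref{thm:bz} (Bugeaud--Zhang), we have for \emph{every} $Y \in F^{m \times n}$ the inequality
\begin{equation*}
    \omega(Y,\boldsymbol{\theta}) \geq \frac{1}{\hat{\omega}(Y^t)}.
\end{equation*}
Hence it suffices to show that $\hat{\omega}(Y^t) \leq 1$ for $\mu$-almost every $Y \in F^{m \times n}$. Now by the trivial inequality between the uniform and ordinary homogeneous exponents, $\hat{\omega}(Y^t) \leq \omega(Y^t)$ (any $\hat{\omega}$ witnessed for all large $T$ is in particular witnessed for arbitrarily large $T$; if this has not been stated earlier I would insert a one-line justification from the definitions). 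Next, since $\mu$ is extremal, $\omega(Y)=1$ for $\mu$-almost every $Y$, and by Dyson's transference principle over function fields (Theorem \ref{thm:dyson}), $\omega(Y)=1 \iff \omega(Y^t)=1$. Therefore $\omega(Y^t)=1$, hence $\hat{\omega}(Y^t) \leq 1$, for $\mu$-almost every $Y$. Feeding this into the Bugeaud--Zhang bound gives $\omega(Y,\boldsymbol{\theta}) \geq 1/\hat{\omega}(Y^t) \geq 1$ for $\mu$-almost every $Y$, and combined with \eqref{equ:mr1} this completes the proof.

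The only mild subtlety — and the place I would be most careful — is the interplay of quantifiers: the null set on which things can fail must be chosen \emph{independently of $\boldsymbol{\theta}$}, which is automatic here because the null set (where $\omega(Y) > 1$) comes purely from the homogeneous extremality hypothesis and does not depend on $\boldsymbol{\theta}$, while the Bugeaud--Zhang and Dyson inequalities are everywhere-valid deterministic statements. One should also make sure that $\hat{\omega}(Y^t)>0$ so that division by it is legitimate; this follows from Dirichlet's theorem (Theorem \ref{thm:Dirich}), which gives $\hat\omega(Y^t)\ge $ some positive constant, in fact the homogeneous uniform exponent is always at least the Dirichlet exponent. Apart from bookkeeping of these points, the proof is a short chain of the already-established transference principles and contains no real obstacle.
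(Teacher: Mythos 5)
Your argument matches the paper's proof essentially step for step: both reduce to showing $\omega(Y,\boldsymbol{\theta})\geq 1$, invoke Theorem~\ref{thm:bz} to bound this below by $1/\hat{\omega}(Y^t)$, and control $\hat{\omega}(Y^t)$ via the inequality $\hat{\omega}\leq\omega$ together with Dyson's transference (Theorem~\ref{thm:dyson}) applied to the extremality hypothesis $\omega(Y)=1$ a.e. The paper additionally records $\hat{\omega}(Y^t)\geq 1$ from Dirichlet to conclude $\hat{\omega}(Y^t)=1$, which you correctly note is the same positivity fact needed to make the division legitimate, so the two proofs are the same in substance.
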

\begin{proof}
It is clear from the definitions that
\begin{equation}\label{equ:wt}
 \omega^{\times}(Y,\boldsymbol{\theta}) \geq \omega(Y,\boldsymbol{\theta}) 
\end{equation}
for all $Y \in F^{m \times n}$ and $\boldsymbol{\theta} \in F^m.$ Also from Dirichlet's theorem and the definition of uniform exponent we have
\begin{equation}\label{equ:hw}
\hat{\omega}(Y) \geq 1 \text{ and } \omega(Y) \geq \hat{\omega} (Y)
\end{equation}
for all $Y \in F^{m \times n}.$
Since the given measure $\mu$ is extremal, $\omega(Y)=1$ for $\mu$-almost every $Y \in F^{m \times n}.$ Now applying Theorem \ref{thm:dyson}, we get that $\omega(Y^t)=1$ for $\mu$-almost every $Y \in F^{m \times n}$. Again from \eqref{equ:hw}, we get $\hat{\omega}(Y^t)=1$ for $\mu$-almost every $Y \in F^{m \times n}$. Finally using Theorem \ref{thm:bz} we get that 
\begin{equation}\label{equ:bz}
        \omega(Y,\boldsymbol{\theta}) \geq \frac{1}{\hat{\omega}({Y}^t)} \geq 1
    \end{equation}
    for $\mu$-almost every $Y \in F^{m \times n}.$ This completes the proof of our desired result in view of equation \eqref{equ:wt}.
\end{proof}

\subsection{Proof of Proposition \ref{prop:inhomo_st-ext_imply_inhomo_ext}} Suppose $\mu$ be an inhomogeneously strongly extremal measure on $F^{m \times n}.$  Then given any $\boldsymbol{\theta} \in F^m,$ $\omega^{\times}(Y,\boldsymbol{\theta})=1$ for $\mu$-almost every $Y \in F^{m \times n}.$ Therefore $\omega(Y,\boldsymbol{\theta}) \leq 1$ for all $\boldsymbol{\theta} \in F^m$ and for $\mu$-almost every $Y \in F^{m \times n},$ by using  \eqref{equ:mr1}. \\

\noindent We will be done if we can show that  $\omega(Y,\boldsymbol{\theta}) \geq 1$ for all $\boldsymbol{\theta} \in F^m$ and for $\mu$-almost every $Y \in F^{m \times n}.$ Since $\mu$ is inhomogeneously strongly extremal, we trivially have that $\mu$ is extremal. By applying Proposition \ref{prop:ext_w_W}, we get our desired result.

\section{Inhomogeneous Transference Principle}\label{section:itp} The main ingredient in the proof of Theorem \ref{thm:main} and Theorem \ref{thm:nonextremal_upper_bound}, is the inhomogeneous transference principle \cite[Section 5]{BV}. Suppose that $(X,d)$ be a locally compact metric space. Given two countable indexing set $\mathcal{A}$ and $\mathbf{T},$ let $H$ and $I$ be two maps from $\mathbf{T} \times \mathcal{A} \times \mathbb{R}_+$ into the set of open subsets of $X$ such that
\begin{equation}\label{equ:H}
    H \,\, : \,\,  (\mathbf{t},\alpha, \lambda) \in \mathbf{T} \times \mathcal{A} \times \mathbb{R}_+ \mapsto H_{\mathbf{t}}(\alpha,\lambda)
\end{equation}
and
\begin{equation}\label{equ:I}
     I \,\, : \,\, (\mathbf{t},\alpha, \lambda) \in \mathbf{T} \times \mathcal{A} \times \mathbb{R}_+ \mapsto I_{\mathbf{t}}(\alpha,\lambda).
\end{equation}
Also, let
\begin{equation}\label{equ:H_t_I_t}
    H_{\mathbf{t}}(\lambda)\defeq \bigcup_{\alpha \in \mathcal{A}} H_{\mathbf{t}}(\alpha,\lambda) \quad \text{and} \quad I_{\mathbf{t}}(\lambda)\defeq  \bigcup_{\alpha \in \mathcal{A}} I_{\mathbf{t}}(\alpha,\lambda).
\end{equation}
Let $\Phi$ denote a set of functions $\phi : \mathbf{T} \rightarrow \mathbb{R}_+.$ For $\phi \in \Phi,$ consider the limsup sets
\begin{equation}\label{equ:lambda_H_I}
    \Lambda_H(\phi) = \displaystyle \limsup_{\mathbf{t} \in \mathbf{T}} H_{\mathbf{t}}(\phi(\mathbf{t})) \quad \text{and} \quad \Lambda_I(\phi) = \displaystyle \limsup_{\mathbf{t} \in \mathbf{T}} I_{\mathbf{t}}(\phi({\mathbf{t}})).
\end{equation}
We call the sets associated with the maps $H$ and $I$ as homogeneous and inhomogeneous sets respectively. Now we discuss two important properties, which are the key ingredients for the inhomogeneous transference principle. \\

\noindent$\textbf{The intersection property.} $ We say that the triple $(H,I,\Phi)$  satisfy the \emph{intersection property} if for any $\phi \in \Phi,$ there exists $\phi^* \in \Phi$ such that for all but finitely many $\mathbf{t} \in T$ and all distinct $\alpha$ and $\alpha'$ in $\mathcal{A},$ we have that
\begin{equation}\label{equ:inter_property}
    I_{\mathbf{t}}(\alpha,\phi(\mathbf{t})) \cap I_{\mathbf{t}}(\alpha',\phi(\mathbf{t})) \subseteq H_{\mathbf{t}}(\phi^*(\mathbf{t})).
\end{equation}

\noindent $\textbf{The contraction property.}$ We suppose that $\mu$ be a non-atomic finite doubling measure supported on a bounded subset $\mathbf{S}$ of $X.$

\noindent  $\mu$ is said to be $\emph{contracting with respect}$ to $(I, \Phi)$ if for any $\phi \in \Phi,$ there exists $\phi^+ \in \Phi$ and a sequence of positive numbers $\{ k_{\mathbf{t}} \}_{\mathbf{t} \in \mathbf{T}}$ satisfying
\begin{equation}\label{equ:itp_conv}
    \displaystyle \sum_{\mathbf{t} \in \mathbf{T}} k_{\mathbf{t}} < \infty,
\end{equation}
and for all but finitely many $\mathbf{t} \in \mathbf{T}$ and all $\alpha \in \mathcal{A}$ there exists a collection $C_{\mathbf{t},\alpha}$ of balls $B$ centred in $\mathbf{S}$ satisfying the following three conditions:
\begin{enumerate}[label=(C.{{\arabic*}})]
    \item \label{equ:cont_prop_i} $\mathbf{S} \cap I_{\mathbf{t}}(\alpha,\phi({\mathbf{t}})) \subseteq \displaystyle \bigcup_{B \in C_{\mathbf{t},\alpha}} B$, 
    \item \label{equ:cont_prop_ii} 
    $\mathbf{S} \cap  \displaystyle \bigcup_{B \in C_{\mathbf{t},\alpha}} B \subseteq I_{\mathbf{t}}(\alpha,\phi^+(\mathbf{t}))$, and 
    \item \label{equ:cont_prop_iii}
    $\mu(5B \cap I_{\mathbf{t}}(\alpha,\phi(\mathbf{t}))) \leq k_{\mathbf{t}} \mu(5B)$.
\end{enumerate}
We now recall the following transference theorem from \cite[Theorem 5]{BV}.
\begin{theorem}\label{thm:transf}
Let the triple $(H,I,\Phi)$ satisfies the intersection property and $\mu$ is contracting with respect to $(I,\Phi).$ Then
\begin{equation}
    \forall \phi \in \Phi,\, \mu(\Lambda_H(\phi))=0 \Longrightarrow \forall \phi \in \Phi,\,\mu(\Lambda_I(\phi))=0.
\end{equation} 
\end{theorem}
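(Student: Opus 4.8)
The statement is \cite[Theorem 5]{BV}, and the plan is to reproduce its proof, noting that nothing about the ambient space is used beyond local compactness and a Vitali-type $5r$-covering lemma; both are available for $X=F^{m\times n}$, which is moreover an ultrametric space, so the covering step is even cleaner (any family of balls admits a disjoint subfamily with the same union). First I would fix $\phi\in\Phi$ and extract the auxiliary data: $\phi^{*}\in\Phi$ from the intersection property, and $\phi^{+}\in\Phi$ together with the summable sequence $\{k_{\mathbf t}\}_{\mathbf t\in\mathbf T}$ and the ball families $\{C_{\mathbf t,\alpha}\}$ from the contraction property. Since $\mu$ is finite and supported on the bounded set $\mathbf S$, it suffices to prove $\mu(\Lambda_I(\phi)\cap\mathbf S)=0$.

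Next I would split $\Lambda_I(\phi)\cap\mathbf S$ according to multiplicity. Call a point $x$ \emph{heavily covered} if $x\in I_{\mathbf t}(\alpha,\phi(\mathbf t))\cap I_{\mathbf t}(\alpha',\phi(\mathbf t))$ for some distinct $\alpha,\alpha'\in\mathcal A$ for infinitely many $\mathbf t$. For all but finitely many $\mathbf t$ the intersection property forces such an intersection into $H_{\mathbf t}(\phi^{*}(\mathbf t))$, so every heavily covered point lies in $\limsup_{\mathbf t}H_{\mathbf t}(\phi^{*}(\mathbf t))=\Lambda_H(\phi^{*})$, which is $\mu$-null by hypothesis. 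It therefore remains to treat $x\in\Lambda_I(\phi)\cap\mathbf S$ that are not heavily covered: there are infinitely many $\mathbf t$ with $x\in I_{\mathbf t}(\phi(\mathbf t))$, and for all but finitely many of those $x$ lies in exactly one set $I_{\mathbf t}(\alpha(\mathbf t),\phi(\mathbf t))$.

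For these points I would run a convergence Borel--Cantelli argument driven by the contraction property. For a fixed admissible $\mathbf t$ and any $\alpha$, applying the $5r$-covering lemma to $C_{\mathbf t,\alpha}$ produces a disjoint subfamily $\{B^{\alpha}_{j}\}_{j}\subseteq C_{\mathbf t,\alpha}$ with $\bigcup_{B\in C_{\mathbf t,\alpha}}B\subseteq\bigcup_{j}5B^{\alpha}_{j}$; combining this with (C.1), (C.3), the doubling property (which gives $\mu(5B)\le c^{3}\mu(B)$ upon iterating $\mu(2B)\le c\mu(B)$), the disjointness of the $B^{\alpha}_{j}$ and finiteness of $\mu$ yields
\[
\mu\bigl(\mathbf S\cap I_{\mathbf t}(\alpha,\phi(\mathbf t))\bigr)\le\sum_{j}\mu\bigl(5B^{\alpha}_{j}\cap I_{\mathbf t}(\alpha,\phi(\mathbf t))\bigr)\le k_{\mathbf t}\sum_{j}\mu(5B^{\alpha}_{j})\le c^{3}\,\mu(X)\,k_{\mathbf t},
\]
a bound uniform in $\alpha$. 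Now the uniquely-covered part of $I_{\mathbf t}(\phi(\mathbf t))$ splits as a \emph{disjoint} union $\bigsqcup_{\alpha}U^{\alpha}_{\mathbf t}$ with $U^{\alpha}_{\mathbf t}\subseteq\mathbf S\cap I_{\mathbf t}(\alpha,\phi(\mathbf t))$, while, for all but finitely many $\mathbf t$, the remainder of $\mathbf S\cap I_{\mathbf t}(\phi(\mathbf t))$ lies inside $H_{\mathbf t}(\phi^{*}(\mathbf t))$ by the intersection property; and every not-heavily-covered bad point lies in $\bigsqcup_{\alpha}U^{\alpha}_{\mathbf t}$ for infinitely many $\mathbf t$. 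Using the disjointness across $\alpha$ together with the per-$\alpha$ bound above, and invoking (C.2) to confine the indices that can contribute to a given $\mathbf t$, one estimates $\mu\bigl(\bigsqcup_{\alpha}U^{\alpha}_{\mathbf t}\bigr)$ by a constant multiple of $k_{\mathbf t}$ with the constant independent of $\mathbf t$; since $\sum_{\mathbf t}k_{\mathbf t}<\infty$, the convergence Borel--Cantelli lemma gives $\mu\bigl(\limsup_{\mathbf t}\bigsqcup_{\alpha}U^{\alpha}_{\mathbf t}\bigr)=0$. Combining the two cases, $\mu(\Lambda_I(\phi)\cap\mathbf S)\le\mu(\Lambda_H(\phi^{*}))+0=0$.

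The main obstacle is exactly the bookkeeping in the last step: one must arrange the covering families so that the disjointness coming from the intersection property and the measure contraction coming from (C.3) are compatible, and so that the constant relating $\mu\bigl(\bigsqcup_{\alpha}U^{\alpha}_{\mathbf t}\bigr)$ to $k_{\mathbf t}$ does not depend on $\mathbf t$ — this is where (C.2), the finiteness of $\mu$, and the simultaneous juggling of the two auxiliary functions $\phi^{*}$ and $\phi^{+}$ are all essential. Everything else — extracting $\phi^{*},\phi^{+},\{k_{\mathbf t}\}$, the reduction to $\mathbf S$, and the two Borel--Cantelli invocations — is routine, and no property of $F^{m\times n}$ beyond those already used in the abstract framework of \cite[Section 5]{BV} is needed.
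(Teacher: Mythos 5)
First, note that the paper does not prove this theorem: it states it as a recollection of \cite[Theorem 5]{BV} and uses it as a black box, so there is no in-paper argument to compare yours against. Your plan of transplanting the Beresnevich--Velani proof to $X=F^{m\times n}$ (locally compact, ultrametric, $5r$-covering lemma available) is the right one and is what the paper implicitly relies on.

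As a proof, however, your sketch has a genuine gap precisely at the step you flag as ``the main obstacle,'' and it is not mere bookkeeping. The per-$\alpha$ bound you display,
\[
\mu\bigl(\mathbf S\cap I_{\mathbf t}(\alpha,\phi(\mathbf t))\bigr)\le c^{3}\,\mu(X)\,k_{\mathbf t},
\]
is correct but cannot be summed: the index set $\mathcal A=\Lambda^m\times(\Lambda^n\setminus\{0\})$ is countably infinite, and a countable disjoint union of sets each of measure at most $c^{3}\mu(X)k_{\mathbf t}$ is only bounded by $\mu(X)$, not by any multiple of $k_{\mathbf t}$. Disjointness of the $U^{\alpha}_{\mathbf t}$ therefore does \emph{not} yield the uniform estimate $\mu\bigl(\bigsqcup_\alpha U^{\alpha}_{\mathbf t}\bigr)\le Ck_{\mathbf t}$ that your Borel--Cantelli step requires. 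The lossy inequality is $\sum_j\mu(5B^{\alpha}_j)\le c^{3}\mu(X)$; it must instead be sharpened to $\sum_j\mu(5B^{\alpha}_j)\le c^{3}\mu\bigl(\bigsqcup_j B^{\alpha}_j\bigr)$ by disjointness of the $B^{\alpha}_j$, then $\mu\bigl(\bigsqcup_j B^{\alpha}_j\bigr)=\mu\bigl(\mathbf S\cap\bigsqcup_j B^{\alpha}_j\bigr)\le\mu\bigl(I_{\mathbf t}(\alpha,\phi^{+}(\mathbf t))\bigr)$ via (C.2) and the fact that $\mu$ is supported on $\mathbf S$, and only \emph{then} summed over $\alpha$ --- at which point the intersection property must be invoked a second time, for $\phi^{+}$ rather than $\phi$, so that the sets $I_{\mathbf t}(\alpha,\phi^{+}(\mathbf t))$ are pairwise disjoint modulo a homogeneous $H_{\mathbf t}$-set and $\sum_\alpha\mu\bigl(I_{\mathbf t}(\alpha,\phi^{+}(\mathbf t))\bigr)$ is controlled by $\mu(X)$ plus mass inside a null limsup. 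This double use of the intersection property is exactly the ``juggling of $\phi^{*}$ and $\phi^{+}$'' you allude to; without carrying it out, the summability of $\mu\bigl(\bigsqcup_\alpha U^{\alpha}_{\mathbf t}\bigr)$ over $\mathbf t$ is not established and the Borel--Cantelli conclusion does not follow.
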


\section{Upper bounds for Diophantine exponents}
 
In this section, we will prove  Part \ref{thm:main_B} of Theorem \ref{thm:main} (the proof of Part \ref{thm:main_A} of Theorem \ref{thm:main} is similar to that of Part \ref{thm:main_B}). Clearly, one direction is trivial. For the other direction, let $\mu$ be a measure on $F^{m \times n}$, which is strongly contracting almost everywhere and  we want to show that 
\begin{equation}\label{equ:ub1}
    \text{$\mu$ is strongly extremal} \quad \implies \quad \text{$\mu$ is inhomogeneously strongly extremal}.
\end{equation}
Earlier we observed that this amounts to showing the upper and lower bounds, \eqref{equ:ub} and \eqref{equ:lb} respectively. We have already proved the lower bound in the last section. We prove the upper bound in this section using the inhomogeneous transference principle. In fact, we will prove the upper bound for the general non-extremal case (i.e., Part \ref{thm:nonextremal_upper_bound_ii} of Theorem \ref{thm:nonextremal_upper_bound}). We follow the strategy of Beresnevich and Velani (\cite{BV}).

 Let $\mu$ be a measure on $F^{m \times n},$ which is strongly contracting almost everywhere. Suppose that $\omega^{\times}(Y)=\eta $ for $\mu$-almost every $Y \in F^{m \times n}.$ We want to prove that, $\forall \boldsymbol{\theta} \in F^m$, $\omega^{\times}(Y,\boldsymbol{\theta}) \leq \eta$ for $\mu$-almost every $Y \in F^{m \times n}.$
 
 \noindent Keeping this in our mind, we define
\begin{equation}\label{equ:ub2}
\mathcal{U}_{m,n}^{\boldsymbol{\theta}}(\eta) :=\{Y \in F^{m \times n} : \omega^{\times}(Y, \boldsymbol{\theta}) > \eta \}.
\end{equation}
Observe that Part \ref{thm:nonextremal_upper_bound_ii} of Theorem \ref{thm:nonextremal_upper_bound}  reduces to proving the following:
\begin{equation}\label{equ:ub3}
    \mu(\mathcal{U}_{m,n}^{\boldsymbol{\theta}}(\eta))=0 \quad \text{for all $\boldsymbol{\theta} \in F^m$}.
\end{equation}
Consider
\begin{equation}\label{equ:ub4}
    \mathbf{T}=\mathbb Z^{m+n}.
\end{equation}
For each $\mathbf{t}=(t_1,\dots,t_{m+n})\in \mathbf{T},$ let
\begin{equation}\label{equ:ub5}
a_{\mathbf{t}}:=\diag \{X^{t_1},\dots,X^{t_m},X^{-t_{m+1}},\dots,X^{-t_{m+n}}\} \in F^{m \times n}.
\end{equation}
For any matrix $Y \in F^{m \times n},$ let
$$U_Y :=\begin{bmatrix}
                 I_m& Y\\0& I_n
\end{bmatrix},$$
where $I_k$ denotes the identity matrix of size $k \times k.$ We can view $U_{Y}$ as a linear operator on $F^{m + n}.$ Now given any $\mathbf{\boldsymbol{\theta}}=(\theta_1,\dots,\theta_m) \in F^m,$ we define the following affine transformation $U_Y^{\mathbf{\boldsymbol{\theta}}}$ on $F^{m + n}$.
\begin{equation*}\label{equ:ub6}
    U_Y^{\mathbf{\boldsymbol{\theta}}}(\mathbf{a}):=U_Y^{\mathbf{\boldsymbol{\theta}}}\mathbf{a}:=U_Y (\mathbf{a})+ \Theta  \quad \forall \,\, \mathbf{a} \in F^{m+n},
\end{equation*}
 where $\Theta=({\theta}_1,\dots,{\theta}_m,0,\dots,0)^t \in F^{m+n}.$
 
\noindent Let 
\begin{equation}\label{equ:ub7}
\mathcal{A}= \Lambda^m \times (\Lambda^n \setminus \{0\}).
\end{equation}
For $\varepsilon>0,$ $\mathbf{t} \in T$ and $\alpha \in \mathcal{A},$ let
\begin{equation}\label{equ:ub8}
\Delta_{\mathbf{t}}^{\boldsymbol{\theta}}(\alpha,\varepsilon):=\{Y \in F^{m \times n}:\|a_{\mathbf t}U_{Y}^{\boldsymbol{\theta}} \alpha\| < \varepsilon \}
\end{equation}
and 
\begin{equation}\label{equ:ub9}
\Delta_{\mathbf{t}}^{\boldsymbol{\theta}}(\varepsilon):= \bigcup_{\alpha \in \mathcal A} \Delta_{\mathbf{t}}^{\boldsymbol{\theta}}(\alpha,\varepsilon)=\{Y \in F^{m \times n}: \inf_{\alpha \in \mathcal{A}} \|a_{\mathbf t}U_{Y}^{\boldsymbol{\theta}} \alpha \| < \varepsilon \}.
\end{equation}
For $\tau>0,$ consider the function
\begin{equation}\label{equ:ub10}
\phi^{\tau} : \mathbf{T} \rightarrow \mathbb R_{+} ;\,\, \mathbf{t} \mapsto \phi^{\tau}_{\mathbf{t}}:= e^{-\tau \sigma(\mathbf{t})}
\end{equation}
where $\sigma(\mathbf{t}):=t_1+\dots+t_{m+n},$ and the following set
\begin{equation}\label{equ:ub11}
\Delta_{\mathbf{T}}^{\boldsymbol{\theta}}(\phi^{\tau}):= \limsup_{\mathbf t \in \mathbf T} \Delta_{\mathbf{t}}^{\boldsymbol{\theta}}(\phi^{\tau}).
\end{equation}
For $\boldsymbol{\theta}=\boldsymbol{0} ,$ i.e., the homogeneous case, we denote $\Delta_{\mathbf{T}}^{\boldsymbol{\theta}}(\phi^{\tau})$ by $\Delta_{\mathbf{T}}(\phi^{\tau}).$ The following proposition allows us to reformulate the set $\mathcal{U}_{m,n}^{\boldsymbol{\theta}}(\eta)$ in terms of the above lim sup sets.

\begin{proposition}\label{prop:main}
    There exists a subset $\mathbf{T}$ of $ \mathbb Z^{m + n}$ such that
    \begin{equation}\label{equ:ub12}
        \sum_{\mathbf t \in \mathbf T} e^{-\tau \sigma(\mathbf t)} < \infty \quad \forall \,\, \tau>0
    \end{equation}
    and
    \begin{equation}\label{equ:ub13}
        \mathcal{U}_{m,n}^{\boldsymbol{\theta}}(\eta)=\bigcup_{\tau>0} \Delta_{\mathbf{T}}^{\boldsymbol{\theta}}(\phi^{\tau}), \quad \forall \,\, \boldsymbol{\theta} \in F^m.
    \end{equation}
\end{proposition}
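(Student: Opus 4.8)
The plan is to unwind the definition of the multiplicative inhomogeneous exponent $\omega^\times(Y,\boldsymbol\theta)$ and show that the Diophantine condition $\omega^\times(Y,\boldsymbol\theta) > \eta$ is equivalent to $Y$ lying in infinitely many of the sets $\Delta_{\mathbf t}^{\boldsymbol\theta}(\phi^\tau)$ for some $\tau > 0$. Recall that $\|a_{\mathbf t} U_Y^{\boldsymbol\theta}\alpha\|$, for $\alpha = (\mathbf p, \mathbf q) \in \Lambda^m \times (\Lambda^n\setminus\{0\})$, unpacks to $\max\{ |X^{t_i}(Y_i\mathbf q + p_i + \theta_i)| : 1\le i\le m\} \cup \{ |X^{-t_{m+j}} q_j| : 1\le j\le n\}$. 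So $\|a_{\mathbf t}U_Y^{\boldsymbol\theta}\alpha\| < \varepsilon$ with $\varepsilon = e^{-\tau\sigma(\mathbf t)}$ says precisely that $|Y_i\mathbf q + p_i + \theta_i| < e^{-t_i - \tau\sigma(\mathbf t)}$ for each $i$ and $|q_j| < e^{t_{m+j} - \tau\sigma(\mathbf t)}$ for each $j$. This is a weighted (box) version of the inequalities defining $\omega^\times$, and the first step is to record that the product conditions $\prod(Y\mathbf q + \mathbf p + \boldsymbol\theta) < e^{-\omega T}$ and $\prod_+(\mathbf q) < e^T$ can be captured, up to the $\bigcup_{\tau>0}$ and $\limsup_{\mathbf t}$, by ranging over all such boxes $\mathbf t \in \mathbb Z^{m+n}$.

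Next I would choose the indexing set $\mathbf T$. The natural choice is $\mathbf T = \mathbb Z_+^{m+n}$ (or a suitable subset thereof with all coordinates nonnegative and bounded below); then $\sigma(\mathbf t) = t_1 + \dots + t_{m+n} \ge 0$ grows, and the convergence $\sum_{\mathbf t\in\mathbf T} e^{-\tau\sigma(\mathbf t)} < \infty$ for every $\tau > 0$ is the elementary estimate $\sum_{\mathbf t\in\mathbb Z_+^{m+n}} e^{-\tau\sigma(\mathbf t)} = \big(\sum_{k\ge 0} e^{-\tau k}\big)^{m+n} = (1-e^{-\tau})^{-(m+n)} < \infty$, which establishes \eqref{equ:ub12}. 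Some care is needed because the defining inequalities for $\omega^\times$ involve $\max\{1,|q_i|\}$ rather than $|q_i|$, and because $Y\mathbf q + \mathbf p + \boldsymbol\theta$ may have some coordinates much smaller than others; one handles this by splitting the "budget" $T$ among the coordinates according to the actual sizes $|q_j|$ and $|Y_i\mathbf q + p_i + \theta_i|$, which is exactly what the freedom in the vector $\mathbf t$ provides. One should also pass between the strict exponent inequality $\omega^\times > \eta$ and a countable union over rational $\tau > \eta$ (or simply $\tau>0$ shifted appropriately), using that $\omega^\times(Y,\boldsymbol\theta) > \eta$ iff there is $\tau > 0$ with $\omega^\times(Y,\boldsymbol\theta) \ge \eta + \tau$, and then matching "arbitrarily large $T$" with "$\mathbf t$ ranging over an infinite subfamily," i.e. the $\limsup$.

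For the inclusion $\mathcal U_{m,n}^{\boldsymbol\theta}(\eta) \subseteq \bigcup_{\tau>0}\Delta_{\mathbf T}^{\boldsymbol\theta}(\phi^\tau)$: if $\omega^\times(Y,\boldsymbol\theta) > \eta$, fix $\tau>0$ with $\omega^\times(Y,\boldsymbol\theta) > \eta + (m+n)\tau$ say, and for each of the arbitrarily large $T$ witnessing the exponent pick integers $t_i \approx$ (the negative log-size of the $i$-th coordinate of $Y\mathbf q+\mathbf p+\boldsymbol\theta$, truncated) and $t_{m+j} \approx$ (the log-size of $q_j$), arranged so that $\sigma(\mathbf t) \approx T$; then $Y \in \Delta_{\mathbf t}^{\boldsymbol\theta}(\phi^\tau)$ for that $\mathbf t$, and since $T\to\infty$ forces $\sigma(\mathbf t)\to\infty$ we get $Y$ in the $\limsup$. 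For the reverse inclusion, if $Y\in\Delta_{\mathbf t}^{\boldsymbol\theta}(\phi^\tau)$ for infinitely many $\mathbf t$, set $T := \sigma(\mathbf t)$ and read off from the box inequalities that $\prod(Y\mathbf q+\mathbf p+\boldsymbol\theta) < e^{-\sum t_i - m\tau\sigma(\mathbf t)} \le e^{-\tau' T}$ for an appropriate $\tau' > 0$ (using $\sum t_i \ge 0$) and $\prod_+(\mathbf q) < e^{T}$ up to harmless constants absorbed by enlarging $\mathbf t$; hence $\omega^\times(Y,\boldsymbol\theta) \ge \tau' > \eta$ after calibrating constants, giving \eqref{equ:ub13}. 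The main obstacle I anticipate is purely bookkeeping: aligning the strict-versus-nonstrict inequalities, the $\max\{1,\cdot\}$ in $\prod_+$, and the integer-rounding of the $t_i$ so that the constants come out consistently and the "arbitrarily large $T$" quantifier matches the $\limsup$ over $\mathbf t$ — no deep idea is required beyond the observation that a single multiplicative inequality is an infinite union of box inequalities indexed by how the total budget is distributed, exactly as in the number field case treated by Beresnevich and Velani.
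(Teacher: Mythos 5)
There is a genuine gap, and it concerns the choice of the index set $\mathbf{T}$ and, correspondingly, how the vectors $\mathbf{t}$ are matched to the Diophantine data. You propose $\mathbf{T}=\mathbb Z_+^{m+n}$ and then, for the forward inclusion, to pick $t_i\approx u_i$ (the integer recording $|Y_i\mathbf q+p_i+\theta_i|\approx e^{-u_i}$) and $t_{m+j}\approx v_j$ (with $\max\{1,|q_j|\}\approx e^{v_j}$). But with that choice $\|a_{\mathbf t}U_Y^{\boldsymbol\theta}\alpha\|\approx\max\{e^{t_i-u_i},e^{v_j-t_{m+j}}\}\approx 1$, which is nowhere near the threshold $e^{-\tau\sigma(\mathbf t)}$ when $\sigma(\mathbf t)$ is large and $\tau>0$ is fixed. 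To force the contraction one must \emph{shift} the first $m$ coordinates down and the last $n$ up by a common amount comparable to $\xi:=\frac{\sigma(\mathbf u)-\eta\sigma(\mathbf v)}{m+\eta n}$, i.e.\ take $\mathbf t=(u_1-[\xi],\dots,u_m-[\xi],\,v_1+[\xi],\dots,v_n+[\xi])$; then $\|a_{\mathbf t}U_Y^{\boldsymbol\theta}\alpha\|\lesssim e^{-\xi}$, and one checks $\xi\gtrsim\sigma(\mathbf t)$ (this is the paper's \eqref{equ:p17}). After this shift the first $m$ coordinates of $\mathbf t$ can be negative, so $\mathbf T\not\subseteq\mathbb Z_+^{m+n}$ in general.

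The reverse inclusion breaks for $\mathbf T=\mathbb Z_+^{m+n}$ too. Suppose $Y\in\Delta_{\mathbf t}^{\boldsymbol\theta}(\phi^\tau)$ for infinitely many $\mathbf t$; taking products you obtain $\prod(Y\mathbf q+\mathbf p+\boldsymbol\theta)<e^{-\sum_{j\le m}t_j-m\tau\sigma(\mathbf t)}$ and $\prod_+(\mathbf q)<e^{\sum_{j\le n}t_{m+j}}$. To deduce $\omega^\times(Y,\boldsymbol\theta)>\eta$ you need $\sum_{j\le m}t_j+m\tau\sigma(\mathbf t)>(\eta+\varepsilon)\sum_{j\le n}t_{m+j}$, which is false for generic $\mathbf t\in\mathbb Z_+^{m+n}$: take $\mathbf t=(0,\dots,0,N,\dots,N)$ and $\eta$ large. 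The paper's $\mathbf T$ is exactly the image of the pairs $(\mathbf u,\mathbf v)$ with $\sigma(\mathbf u)\ge\eta\sigma(\mathbf v)$ under the $\xi$-shift; this constraint encodes the inequalities $\sum_{j\le m}t_j\ge\frac{\eta}{\eta+1}\sigma(\mathbf t)$ and $\sum_{j\le n}t_{m+j}\le\frac{1}{\eta+1}\sigma(\mathbf t)$, which is precisely what makes the reverse inclusion go through. Your phrase ``up to harmless constants absorbed by enlarging $\mathbf t$'' is papering over exactly this point. Finally, the convergence of $\sum_{\mathbf t\in\mathbf T}e^{-\tau\sigma(\mathbf t)}$ is not the elementary geometric-series computation you cite, because $\mathbf T$ is not a product set and has negative entries; one instead bounds $\sigma(\mathbf t)$ below by a positive multiple of $n\sigma(\mathbf u)+m\sigma(\mathbf v)$ and sums over $(\mathbf u,\mathbf v)\in\mathbb Z_+^m\times\mathbb Z_+^n$. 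In short, you correctly identify that a multiplicative inequality should be decomposed into boxes, but the specific index set must be $\eta$-dependent and built around the recentering parameter $\xi$, and this is the content you are missing.
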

\begin{proof}
First, we define the set $\mathbf T \subset \mathbb Z^{m+n}$ which will do the job for us. For $\mathbf{u}=(u_1,\dots,u_m) \in \mathbb Z^{m}_{+}$ and $\mathbf{v}=(v_1,\dots,v_n) \in \mathbb Z^{n}_{+} ,$ we define the following
\begin{equation*}\label{equ:p1}
    \sigma(\mathbf u):= \sum_{j=1}^{m} u_j, \quad  \sigma(\mathbf v):= \sum_{i=1}^{n} v_i, \text{ and } \xi:=\xi(\mathbf u,\mathbf v)=\frac{\sigma(\mathbf u )- \eta \sigma(\mathbf v)}{m+\eta n},
\end{equation*}
where $\mathbb Z_{+}=\{s \in \mathbb Z: s \geq 0\}.$ Given $\mathbf u$ and $\mathbf v$ as above, define the $(m+n)$-tuple $\mathbf{t}=(t_1,\dots,t_{m+n})$ as
\begin{equation}\label{equ:p2}
    \mathbf{t}:=(u_1-[\xi],\dots,u_m - [\xi], v_1 + [\xi] , \dots, v_n +[\xi]),
\end{equation}
where for any $x \in \mathbb R,$ $[x]$ denotes the greatest integer not greater than $x.$ Finally we define
\begin{equation}\label{equ:p3}
    \mathbf T := \{\mathbf t \in \mathbb Z^{m+n} \,\, \text{given by} \,\, \eqref{equ:p2}: \mathbf u \in \mathbb Z^{m}_{+}, \,\, \mathbf{v} \in \mathbb Z^{n}_{+} \,\, \text{with} \,\, \sigma(\mathbf u) \geq \eta \sigma(\mathbf v)\}  .
\end{equation}
We show that the above defined $\mathbf T$ will solve the purpose of Proposition \ref{prop:main}. First, let us record a few inequalities which will be essential later.
\begin{align}\label{equ:p4}
 \sigma(\mathbf t)  :=  \sum_{i=1}^{m+n} t_i  &=  \sigma(\mathbf u) - m [\xi]+\sigma(\mathbf v)+ n[\xi]  \nonumber  \\ & =  \frac{\eta +1}{\eta} \sigma(\mathbf u)-m\left(\frac{\xi}{\eta} + [\xi] \right)+n([\xi]- \xi),
\end{align}
and also
\begin{equation}\label{equ:p5}
    \sigma(\mathbf t)= (\eta+1) \sigma(\mathbf v) +m(\xi -[\xi])+n([\xi]+\eta \xi ).
\end{equation}
From \eqref{equ:p4} and \eqref{equ:p5}, we respectively get
\begin{align}\label{equ:p6}
  \frac{\eta}{\eta+1} \sigma(\mathbf t)   & =  \sigma(\mathbf u)- \frac{m \eta}{\eta +1}\left(\frac{\xi}{\eta} + [\xi] \right)+\frac{n \eta}{\eta +1}([\xi]- \xi) \nonumber \\ & \leq \sigma(\mathbf u)- \frac{m \eta}{\eta +1}\left(\frac{\xi}{\eta}+ [\xi] \right), \,\, \text{since} \,\, \xi-1 < [\xi] \leq \xi \nonumber \\ & = \sigma(\mathbf u)- \frac{m \eta}{\eta +1}\left(\frac{[\xi]}{\eta}+ [\xi] \right) + \frac{m \eta}{\eta +1} \left(\frac{[\xi]}{\eta}-\frac{\xi}{\eta} \right) \nonumber \\ & \leq \sigma(\mathbf{u}) - m [\xi], \,\, \text{since} \,\, \xi-1 < [\xi] \leq \xi
\end{align}
and 
\begin{align}\label{equ:p7}
\frac{1}{\eta+1}\sigma(\mathbf t) &=  \sigma(\mathbf v) +\frac{m}{\eta+1}(\xi -[\xi])+\frac{n}{\eta+1}([\xi]+\eta \xi ) \nonumber \\ & \geq \sigma(\mathbf v) +\frac{n}{\eta+1}([\xi]+\eta \xi ), \,\, \text{since} \,\, \xi-1 < [\xi] \leq \xi \nonumber \\ &= \sigma(\mathbf v) +\frac{n}{\eta+1}([\xi]+\eta [\xi] )+\frac{n}{\eta+1} \left(\eta \xi-\eta[\xi]\right) \nonumber \\ & \geq  \sigma(\mathbf{v}) + n [\xi], \,\, \text{as} \,\, \xi-1 < [\xi] \leq \xi.
\end{align}
Since $\xi \geq 0,$ in view of \eqref{equ:p6} and \eqref{equ:p7} we have
\begin{equation}\label{equ:p6i}
    (\eta+1)\sigma(\mathbf v) \leq  \sigma(\mathbf t) \leq \frac{\eta+1}{\eta}\sigma(\mathbf u).
\end{equation}
To show that the series \eqref{equ:ub12} is convergent for our choice of $\mathbf T,$ we give the following estimate.

\begin{align}\label{equ:p7i}
\sigma(\mathbf t)= \frac{\eta}{\eta+1} \sigma(\mathbf{t})+ \frac{1}{\eta+1} \sigma(\mathbf{t}) & =\sigma(\mathbf{u}) + \sigma(\mathbf{v}) - (m-n) [\xi] \nonumber \\ & \geq \sigma(\mathbf{u}) + \sigma(\mathbf{v}) - (m-n) \xi \nonumber \\ & = \sigma(\mathbf{u}) + \sigma(\mathbf{v}) - \frac{(m-n)}{m+\eta n} (\sigma(\mathbf{u}) - \eta \sigma(\mathbf{v}) ) \nonumber \\ & = \frac{\eta +1}{m+\eta n}(n \sigma(\mathbf{u}) + m \sigma(\mathbf{v})),
\end{align}
by using \eqref{equ:p6}, \eqref{equ:p7} and $\xi-1 < [\xi] \leq \xi.  $
In view of \eqref{equ:p7i}, we conclude that the series \eqref{equ:ub12} is convergent and furthermore we get that for any $r \in \mathbb R_+$
\begin{equation}\label{equ:p8i}
  \# \{ \mathbf t \in \mathbf T : \sigma(\mathbf t) < r\}  < \infty.
\end{equation}
First, we show that 
\begin{equation}\label{equ:p8}
\mathcal{U}_{m,n}^{\boldsymbol{\theta}}(\eta) \subseteq \bigcup_{\tau>0} \Delta_{\mathbf T}^{\boldsymbol{\theta}}(\phi^{\tau}).
\end{equation}
Let $Y \in \mathcal{U}_{m,n}^{\boldsymbol{\theta}}(\eta). $ Note that $Y \in \mathcal{U}_{m,n}^{\boldsymbol{\theta}}(\eta)$ if and only if $\exists$ $\varepsilon>0,$ such that for arbitrarily large $T  \geq 1$ there exists $\alpha =(\mathbf{p}, \mathbf q) \in \mathcal A=\Lambda^m \times (\Lambda^n \setminus \{0\})$ satisfying $\|Y \mathbf q + \mathbf p+ \boldsymbol{\theta}\| \leq 1/e$ such that
\begin{equation}\label{equ:p9}
\prod (Y \mathbf q +\mathbf p + \boldsymbol{\theta}) < e^{-(\eta+\varepsilon)T}  \text{ and } {\prod}_+ (\mathbf q) < e^T.
\end{equation}
Since $Y \in \mathcal{U}_{m,n}^{\boldsymbol{\theta}}(\eta),$ \eqref{equ:p9} is satisfied for infinitely many $T\in \mathbb N$. Thus for any such $T ,$ $\exists$ unique $\mathbf{u}=(u_1,\dots,u_m) \in \mathbb Z^{m}_{+}$ and $\mathbf{v}=(v_1,\dots,v_n) \in \mathbb Z^{n}_{+}$ such that
\begin{equation}\label{equ:p10}
e^{-u_j} \leq \max \left\{ |Y_j \mathbf q +p_j +{\theta}_j|, e^{-(\eta+\varepsilon)T}  \right\} < e^{-u_j+1} \quad \text{for} \quad 1 \leq j \leq m
\end{equation}
and
\begin{equation}\label{equ:p11}
e^{v_i} \leq \max \{1,|q_i|\} < e^{v_i +1} \quad \text{for} \quad 1 \leq i \leq n,
\end{equation}
where $Y_j$ denotes the $j$-th row of $Y \in F^{m \times n}.$

\noindent Using \eqref{equ:p10} and \eqref{equ:p11}, we get that
\begin{equation}\label{equ:p12}
e^{-\sigma(\mathbf u)} < \max \left\{ \prod(Y \mathbf q + \mathbf p + \boldsymbol{\theta}), e^{-(\eta+\varepsilon)T} \right\} \quad \text{and} \quad e^{\sigma(\mathbf v)} \leq {\prod}_+ (\mathbf q).
\end{equation}
Now \eqref{equ:p9} and \eqref{equ:p12} implies that $e^{-\sigma(\mathbf u)} < e^{-\sigma(\mathbf v)(\eta+\varepsilon)}.$ Therefore
\begin{equation}\label{equ:p13}
    \sigma(\mathbf u) - \eta \sigma(\mathbf v) > \varepsilon \sigma(\mathbf v) \geq 0.
\end{equation}
Hence $\mathbf t$ given by \eqref{equ:p2} with $\mathbf u=(u_1,\dots,u_m)$ and $\mathbf v=(v_1,\dots,v_n)$ satisfying \eqref{equ:p10} and \eqref{equ:p11} respectively, is in $\mathbf T. $

%\noindent Note that 
%\begin{equation}\label{equ:p14}
    %a_{\mathbf t}U_{Y}^{\boldsymbol{\theta}} \alpha=\left(X^{u_1}(Y_1 \mathbf q+p_1+\boldsymbol{\theta}_1),\dots,X^{u_m}(Y_m \mathbf q+p_m+\boldsymbol{\theta}_m),X^{-v_1}q_1,\dots,X^{-v_n}q_n\right).
%\end{equation}

\noindent If $\sigma(\mathbf u) \leq 2 \eta \sigma (\mathbf v),$ then using \eqref{equ:p6i} and \eqref{equ:p13} we get
\begin{equation*}\label{equ:p15}
    \xi=\frac{\sigma(\mathbf u )- \eta \sigma(\mathbf v)}{m+\eta n} \geq \frac{\varepsilon \sigma(\mathbf v)}{m+\eta n} \geq \frac{\varepsilon \sigma(\mathbf u)}{2 \eta (m+n)} > \frac{\varepsilon \sigma (\mathbf t)}{2(\eta +1)(m+n)}.
\end{equation*}
If $\sigma(\mathbf u) > 2 \eta \sigma (\mathbf v),$ then using \eqref{equ:p6i} we get
\begin{equation*}\label{equ:p16}
    \xi=\frac{\sigma(\mathbf u )- \eta \sigma(\mathbf v)}{m+ \eta n} = \frac{2\sigma(\mathbf u )- 2\eta \sigma(\mathbf v)}{2(m+ \eta n)} > \frac{ \sigma(\mathbf u)}{2(m+\eta n)} > \frac{\eta \sigma (\mathbf t)}{2(\eta +1)(m+ \eta n)}.
\end{equation*}
In view of the above two inequalities, we get that
\begin{equation}\label{equ:p17}
    \xi > \tau_0 \sigma(\mathbf t), \quad \text{where} \quad \tau_0:=\frac{ \min\{\varepsilon,\eta \}}{2(\eta +1)(m+\eta n)}.
\end{equation}
Now we take
\begin{equation}\label{equ:p18}
a_{\mathbf t}=X^{-([\xi]+1)} \diag \{X^{u_1},\dots,X^{u_m},X^{-v_1},\dots,X^{-v_n}\}.
\end{equation}
Then using \eqref{equ:p10} and \eqref{equ:p11},  we get
\begin{equation}\label{equ:p19}
\inf_{\alpha \in \mathcal A} \| a_{\mathbf t}U_{Y}^{\boldsymbol{\theta}} \alpha\| < e \cdot e^{-([\xi]+1)} \leq e \cdot e^{- \xi}.
\end{equation}
Combining \eqref{equ:p17} and \eqref{equ:p19}, we conclude that, for $0<\tau<\tau_0$
\begin{equation}\label{equ:p20}
\inf_{\alpha \in \mathcal A} \| a_{\mathbf t}U_{Y}^{\boldsymbol{\theta}} \alpha\| <  e^{-\tau \sigma(\mathbf t)} 
\end{equation}
for all sufficiently large $\sigma(\mathbf t).$ \eqref{equ:p9} coupled with \eqref{equ:p10} implies that $\sigma(\mathbf u) \rightarrow \infty$ as $T \rightarrow \infty.$ Hence in view of  \eqref{equ:p7i} and the fact that \eqref{equ:p9} holds for arbitrarily large  $T \in \mathbb N,$ we conclude that  \eqref{equ:p20} holds for infinitely many $\mathbf t \in \mathbf T.$  Therefore $Y \in  \Delta_{\mathbf T}^{\boldsymbol{\theta}}(\phi^{\tau})$ for any $\tau \in (0,\tau_0).$ This completes the proof of \eqref{equ:p8}.\\

 Finally to complete the proof of Proposition \ref{prop:main}, we show that 
\begin{equation}\label{equ:p21}
\mathcal{U}_{m,n}^{\boldsymbol{\theta}}(\eta) \supseteq \bigcup_{\tau>0} \Delta_{\mathbf T}^{\boldsymbol{\theta}}(\phi^{\tau}).
\end{equation}
Let $Y \in \Delta_{\mathbf T}^{\boldsymbol{\theta}}(\phi^{\tau})$ for some $\tau>0. $ By definition
\begin{equation*}\label{equ:p22}
    \inf_{\alpha \in \mathcal A} \| a_{\mathbf t}U_{Y}^{\boldsymbol{\theta}} \alpha\| <  e^{-\tau \sigma(\mathbf t)} 
\end{equation*}
for infinitely many $\mathbf t \in \mathbf T.$ For any such $\mathbf t,$ $\exists$ $\alpha=(\mathbf p, \mathbf q) \in \mathcal A$ such that
\begin{equation*}\label{equ:p23}
    \| a_{\mathbf t}U_{Y}^{\boldsymbol{\theta}} \alpha\| <  e^{-\tau \sigma(\mathbf t)}. 
\end{equation*}
By taking the product of first $m$ coordinates and last $n$ non-zero coordinates of $a_{\mathbf t}U_{Y}^{\boldsymbol{\theta}} \alpha$ we respectively get
\begin{equation}\label{equ:p24}
\prod_{j=1}^{m} e^{t_j}|Y_j \mathbf q + p_j+{\theta}_j| < e^{-m \tau \sigma(\mathbf t)}
\end{equation}
and
\begin{equation}\label{equ:p25}
\prod_{1 \leq i \leq n, \,\, q_i \neq 0} e^{-t_{m+i}} |q_i| < e^{- n \tau \sigma(\mathbf t)}.
\end{equation}
Using \eqref{equ:p6i} and the fact that  $\sigma(\mathbf{t}) \geq 0$ (by \eqref{equ:p7}), we get 
\begin{equation}\label{equ:p26}
\prod (Y \mathbf q + \mathbf p +\boldsymbol{\theta} ) < e^{-m \tau \sigma(\mathbf t)} \cdot e^{-\frac{\eta}{\eta +1} \sigma(\mathbf t)}=e^{-\left(\eta +m\tau(\eta+1)\right)\frac{1}{\eta+1} \sigma(\mathbf t)}
\end{equation}
and
\begin{equation}\label{equ:p27}
{\prod}_+ (\mathbf q) < e^{- n \tau \sigma(\mathbf t)} e^{\frac{1}{\eta+1} \sigma(\mathbf t)} < e^{\frac{1}{\eta+1} \sigma(\mathbf t)}.
\end{equation}
 If we take $T=\frac{1}{\eta+1} \sigma(\mathbf t)$ and $\varepsilon :=m \tau (\eta +1),$ then \eqref{equ:p26} and \eqref{equ:p27} holds for arbitrarily large $T.$ Therefore $Y \in \mathcal{U}_{m,n}^{\boldsymbol{\theta}}(\eta)$ and this completes the proof of Proposition \ref{prop:main}.
\end{proof}

\subsection{Proof of Theorem \ref{thm:nonextremal_upper_bound}}
In view of Equation \eqref{equ:ub3} and Proposition \ref{prop:main}, we first note that in order to prove Part \ref{thm:nonextremal_upper_bound_ii} of Theorem \ref{thm:nonextremal_upper_bound} it is enough to show that
\begin{equation}\label{equ:ub14}
\mu(\Delta_{\mathbf{T}}(\phi^{\tau}))=0 \quad \forall \,\, \tau>0 \quad \implies \mu(\Delta_{\mathbf{T}}^{\boldsymbol{\theta}}(\phi^{\tau})) \quad \forall \,\, \tau>0. 
\end{equation}
We use the inhomogeneous transference principle to prove \eqref{equ:ub14}. From now on, $\boldsymbol{\theta} \in F^m$ is fixed and, without loss of generality, we assume that $\mu$ is a strongly contracting on $F^{m \times n}.$ Let $X:=F^{m \times n}$, $\mathcal A= \Lambda^m \times (\Lambda^n \setminus \{0\}),$ $\mathbf T$ be as in Proposition \ref{prop:main}, and the maps $H$ and $I$ are given by 
\begin{equation}\label{equ:ub15}
H_{\mathbf t}(\alpha,\varepsilon):= \Delta_{\mathbf{t}}(\alpha,\varepsilon)=\Delta_{\mathbf{t}}^{\boldsymbol{0}}(\alpha,\varepsilon) \text{ and }I_{\mathbf{t}}(\alpha,\varepsilon):= \Delta_{\mathbf{t}}^{\boldsymbol{\theta}}(\alpha,\varepsilon),
\end{equation}
  where $\varepsilon > 0 ,$ $\mathbf{t} \in \mathbf T$, $\alpha \in \mathcal A$ and $\Delta_{\mathbf{t}}^{\boldsymbol{\theta}}(\alpha,\varepsilon)$ is given by \eqref{equ:ub8}. In view of the above definitions, it now readily follows that $H_{\mathbf t}(\varepsilon)=\Delta_{\mathbf t}^{\boldsymbol{0}}(\varepsilon)$ and $I_{\mathbf{t}}(\varepsilon)=\Delta_{\mathbf{t}}^{\boldsymbol{\theta}} (\varepsilon).$ Let $\Phi$ be the collection of functions given by \eqref{equ:ub10}. Then we have
  \begin{equation}\label{equ:ub16}
\Lambda_H(\phi)=\Delta_{\mathbf T}(\phi):=\Delta_{\mathbf T}^{\boldsymbol{0}}(\phi) \text{ and } \Lambda_I(\phi)=\Delta_{\mathbf T}^{\boldsymbol{\theta}}(\phi),
  \end{equation}
where $\Delta_{\mathbf T}^{\boldsymbol{\theta}}(\phi)$ is given by \eqref{equ:ub11}. Now \eqref{equ:ub14} will immediately follow by applying the inhomogeneous transference principle if we can verify that the triple $(H,I,\Phi)$ satisfies the intersection property and the measure $\mu$ is contracting with respect to $(I,\Phi).$

\subsubsection{The intersection property}
Let $\alpha=(\mathbf{p},\mathbf{q})$ and $\alpha'=(\mathbf{p}',\mathbf{q}')$ be two distinct elements in $\mathcal{A},$ where $\mathbf{p},\mathbf{p}' \in \Lambda^m$ and $\mathbf{q},\mathbf{q}' \in \Lambda^n \setminus \{0\}$. Also let $\phi \in \Phi.$ Then $\phi(\mathbf t)=e^{-\tau \sigma(\mathbf t)}$ for some $\tau >0.$ Consider any element $Y$ from $I_{\mathbf{t}}(\alpha,\phi(\mathbf{t})) \cap I_{\mathbf{t}}(\alpha',\phi(\mathbf{t})) .$ Then we have
\begin{equation}\label{equ:ub17}
\|a_{\mathbf{t}}U_{Y}^{\boldsymbol{\theta}} \alpha \|<\phi(\mathbf{t}) \quad and \quad \|a_{\mathbf{t}}U_{Y}^{\boldsymbol{\theta}} \alpha' \|<\phi(\mathbf{t}).
\end{equation}
Since $\mathbf T$  satisfies \eqref{equ:ub12} and \eqref{equ:p8i}
\begin{equation}\label{equ:ub18}
    \|a_{\mathbf{t}}U_{Y} (\alpha-\alpha') \|= \|a_{\mathbf{t}}U_{Y}^{\boldsymbol{\theta}} \alpha-a_{\mathbf{t}}U_{Y}^{\boldsymbol{\theta}} \alpha' \|  <\phi(\mathbf t)
\end{equation}
for all but finitely many $\mathbf{t} \in \mathbf T.$ Let $\alpha'':=\alpha -\alpha'=(\mathbf{p}'',\mathbf{q}''),$ where $\mathbf{p}''=\mathbf{p}-\mathbf{p'} \in \Lambda^m$ and $\mathbf{q}''=\mathbf{q}-\mathbf{q}' \in \Lambda^n.$ If $\mathbf q''=0,$ from \eqref{equ:ub18} we get that $\|\mathbf p''\| < 1$ for all but finitely many $\mathbf t \in \mathbf T .$ Then we get that $\mathbf p''=0$ (since $\mathbf{p''} \in \Lambda^m$), which is a contradiction as $\alpha \neq \alpha'.$ Hence $\mathbf q'' \neq 0$ and so $\alpha'' \in \mathcal A$. Therefore we have $Y \in \Delta_{\mathbf t}(\alpha'', \phi(\mathbf t)) \subset  H_{\mathbf{t}}(\phi(\mathbf t)).$ This completes the verification of the intersection property with $\phi^*=\phi \in \Phi.$

\subsubsection{The contracting property}
Recall that $\mu$ is a strongly contracting measure on $F^{m \times n}.$ Without loss in generality we may assume that the support $S$ of $\mu$ is bounded. Note that $\mu$ is already doubling, finite and non-atomic. Hence to show that $\mu$ is contracting with respect to $(I,\Phi),$ it only remains to verify  \ref{equ:cont_prop_i} - \ref{equ:cont_prop_iii}. Let $\phi \in \Phi.$ Then $\phi(\mathbf t)=e^{- \tau \sigma(\mathbf t)}$ for some constant $\tau>0.$ Define $\phi^+  :=\sqrt{\phi} \in \Phi.$ Let $r_0$ be as in the definition of strongly contracting measure. Since $\mathbf T$ satisfies \eqref{equ:ub12} and \eqref{equ:p8i}, we have
\begin{equation}\label{equ:ub19}
\phi^+(\mathbf t) \leq \min \{1,r_0\} \quad \text{and}  \quad \sigma(\mathbf t) \geq 0
\end{equation}
for all but finitely many $\mathbf t \in \mathbf T.$ If $\mathbf t=(t_1,\dots,t_{m+n}) \in \mathbf T$ is as above and $\alpha'=(\mathbf p',\mathbf q') \in \mathcal A,$ the set $I_{\mathbf t}(\alpha', \phi(\mathbf t))$ is essentially the set of all $Y \in F^{m \times n}$  such that
\begin{equation}\label{equ:ub20}
  |Y_j \mathbf q'+p_j'+{\theta}_j| < e^{-t_j} \phi(\mathbf t)  \quad \text{and} \quad |q_i'| < e^{t_{m+i}} \phi(\mathbf t)
\end{equation}
for $1 \leq j \leq m$ and $1 \leq i \leq n.$ In a similar fashion, we see that $I_{\mathbf t}(\alpha',\phi^+(\mathbf t))$ is  the set of all $Y \in F^{m \times n}$  such that
\begin{equation}\label{equ:ub21}
  |Y_j \mathbf q'+p_j'+{\theta}_j| < e^{-t_j} \phi^+(\mathbf t)  \quad \text{and} \quad |q_i'| < e^{t_{m+i}} \phi^+(\mathbf t).
\end{equation}
Now we define 
\begin{equation}\label{equ:ub22}
\varepsilon_j=\varepsilon_{j,\mathbf t} := \frac{e^{-t_j} \phi^+(\mathbf t)}{\|\mathbf q'\|} \quad \text{for} \quad j=1,\dots,m \quad \text{and} \quad \delta=\delta_{\mathbf t} := \phi^+(\mathbf t).
\end{equation}
Equation \eqref{equ:p6i} and $\sigma(\mathbf t) \geq 0$ implies that $\sum_{j=1}^{m} t_j \geq 0.$ Hence there exist some $ l \in \{1,\dots, m\}$ such that $t_l \geq 0.$ Therefore, we have
\begin{equation}\label{equ:ub23}
\min_{1 \leq j \leq m} \varepsilon_{j,\mathbf t} < r_0 \quad \text{and} \quad \delta_{\mathbf t} <1,
\end{equation}
since  $\|\mathbf q'\| \geq 1$ and \eqref{equ:ub19} holds.

\noindent Note that $\delta \varepsilon_j=\frac{e^{-t_j}\phi(\mathbf t)}{\|\mathbf q'\|}$ for all $j=1,\dots,m.$ Let $\mathcal{L}_{\mathbf b,\mathbf c}^{\bm{(\varepsilon)}}$ and $\mathcal{L}_{\mathbf b,\mathbf c}^{( \delta \bm{\varepsilon})}$ are given by \eqref{equ:cont_plane} with $\mathbf c:= X^{-(\max_{1 \leq i \leq n}\deg q_i')} \mathbf q'$ and $\mathbf b:=X^{-(\max_{1 \leq i \leq n}\deg q_i')} (\mathbf q' + \boldsymbol{\theta}),$ where $\mathbf q'=(q_1',\dots,q_n').$ Then in view of \eqref{equ:ub20} and \eqref{equ:ub21}, it follows that 
\begin{equation}\label{equ:ub24}
I_{\mathbf t}(\alpha',\phi(\mathbf t))= \mathcal{L}_{\mathbf b,\mathbf c}^{( \delta \bm{\varepsilon})} \quad \text{and} \quad I_{\mathbf t}(\alpha',\phi^+(\mathbf t))= \mathcal{L}_{\mathbf b,\mathbf c}^{( \bm{\varepsilon})}.
\end{equation}
Since $\mu$ is strongly contracting, $\forall \,\,  Y \in \mathcal{L}_{\mathbf b,\mathbf c}^{( \delta \bm{\varepsilon})} \cap S $ there exists an open ball $B_Y$ centred at $Y$ satisfying \eqref{equ:cm_pi} and \eqref{equ:cm_pii}. Following the notation from Section \ref{section:itp}, define $C_{\mathbf t,\alpha'}$  to be the collection of all such balls. Also, define
\begin{equation}\label{equ:ub25}
k_{\mathbf t}:= C (\phi^+)^{\alpha},
\end{equation}
where $C$ and $\alpha$ are constants as in the definition of strongly contracting measure. By \eqref{equ:ub12}, $\sum_{\mathbf t \in  \mathbf T} k_{\mathbf t} < \infty.$ Observe that \ref{equ:cont_prop_i} follows from the definition of $C_{\mathbf t, \alpha'}.$ Finally \ref{equ:cont_prop_ii} and \ref{equ:cont_prop_iii} follows from \eqref{equ:cm_pi}, \eqref{equ:cm_pii} and \eqref{equ:ub24}. This shows that $\mu$ is contracting with respect to $(I, \Phi).$

\begin{remark}
To prove Part \ref{thm:nonextremal_upper_bound_i} of Theorem \ref{thm:nonextremal_upper_bound}, we need to define $\mathcal{U}_{m,n}^{\boldsymbol{\theta}}(\eta):=\{Y \in F^{m \times n}: \omega(Y,\boldsymbol{\theta})> \eta \}$ and show that $\mu(\mathcal{U}_{m,n}^{\boldsymbol{\theta}}(\eta))=0$ for all $\boldsymbol{\theta} \in F^m.$ The Proposition \ref{prop:main} remains unchanged and the only change needed in the proof of Proposition \ref{prop:main} is the definition of $\mathbf T.$ For $u \in \mathbb Z_+$ and $v \in \mathbb Z_+,$ let $\mathbf u:=(u,\dots,u) \in \mathbb Z_+^m$, $\mathbf v=(v_1,\dots,v_n) \in \mathbb Z_+^n$ and $\xi=\frac{mu-\eta nv}{m+\eta n}.$ Now given $u$ and $v$ as above, define $\mathbf t=(t_1,\dots,t_{m+n})$ by
\begin{equation}\label{equ:p28}
\mathbf t :=(u-[\xi],\dots,u-[\xi],v+[\xi],\dots,v+[\xi]).
\end{equation}
Define
\begin{equation}\label{equ:p29}
\mathbf{T}:=\{\mathbf t \in \mathbb Z^{m+n} \,\, \text{given by} \,\, \eqref{equ:p28}: u,v \in \mathbb Z_+ \,\, \text{with} \,\, mu \geq \eta n v  \}.
\end{equation}
Now, in this case, one can easily prove an analogous version of Proposition \ref{prop:main}  by appropriately modifying the arguments of the proof of Proposition \ref{prop:main}.  Once we have Proposition \ref{prop:main}, we can analogously apply the inhomogeneous transference principle to get our desired result. This also completes the proof of Part \ref{thm:main_A} of  Theorem \ref{thm:main}.
\end{remark}

\end{document}